\newtheorem{lemma}{Lemma}[section]
\newtheorem{theorem}[lemma]{Theorem}
\newtheorem{cor}[lemma]{Corollary}
\newtheorem{proposition}[lemma]{Proposition}
\newtheorem{conj}[lemma]{Conjecture}
\theoremstyle{remark}
\theoremstyle{definition}
\newtheorem{example}[lemma]{Example}
\newcommand{\Irr}[1]{\mathrm{Irr}(#1)}
\newcommand{\Sym}[1]{\mathrm{Sym}(#1)}
\newcommand{\Alt}[1]{\mathrm{Alt}(#1)}
\newcommand{\vet}[1]{\overrightarrow{#1}}
\newcommand{\Cyc}[1]{\mathrm{Cyc}(#1)}
\newcommand{\PSL}{\mathrm{PSL}}
\newcommand{\PSU}{\mathrm{PSU}}
\newcommand{\SL}{\mathrm{SL}}
\newcommand{\SU}{\mathrm{SU}}
\def\F{\mathbb{F}}
\def\Z{\mathbb{Z}}
\def\Q{\mathbb{Q}}
\begin{document}

\title[Irreducible $p$-constant characters of finite reflection groups]{Irreducible 
$p$-constant characters of \\finite reflection groups}

\author{Marco Antonio Pellegrini}
\address{Dipartimento di Matematica e Fisica, Universit\`a Cattolica del Sacro Cuore, Via
Musei 41,
25121 Brescia, Italy}
\email{marcoantonio.pellegrini@unicatt.it}

\keywords{Coxeter groups; Nilpotent groups; Irreducible characters; $p$-singular elements; wreath 
products}

\subjclass[2010]{20C15, 20F55, 20D15}

\thanks{This work was supported by the ``National Group for Algebraic and Geometric Structures, and their Applications'' (GNSAGA-INDAM)}

\begin{abstract}
A complex irreducible character of a finite group $G$ is said to be $p$-constant, for some
prime $p$ dividing the order of $G$, if it takes constant value at the set of $p$-singular
elements of $G$. In this paper we classify irreducible $p$-constant characters for finite 
reflection groups, nilpotent groups and complete monomial groups.
We also propose some conjectures about the structure of the groups admitting such characters.
\end{abstract}

\maketitle

\section{Introduction}
 
Let $G$ be a finite group. Fixing a prime $p$ that divides the order of $G$, we denote by $\varSigma_p(G)$ the set of the $p$-singular elements of $G$, i.e. the elements whose 
orders are divisible by $p$. 
We say that a (complex) irreducible character $\chi$ of $G$ is \emph{$p$-constant} if 
it takes
constant value at $\varSigma_p(G)$, that is, if there exists a constant $c_\chi$ such that
$\chi(g)=c_\chi$ for all $g\in \varSigma_p(G)$.
By \cite[Lemma 2.1]{PZ}, the constant $c_\chi$ must be a rational
integer. 
It is easy to see that the irreducible characters $\chi$ of $p$-defect zero  (i.e. whose degree 
equals the order of a Sylow
$p$-subgroup of $G$) are precisely the irreducible  $p$-constant characters with $c_\chi=0$.
In view of this, we can look at $p$-constant characters as a generalization of  $p$-defect zero 
characters (a different generalization to the reducible case was considered in \cite{PZ2}).
In this 
paper 
we will always consider the case $c_\chi\neq 0$: for future convenience, we denote by $\varUpsilon_p(G)$ the set of the irreducible $p$-constant characters of $G$ of non-zero $p$-defect.

The problem of classifying irreducible $p$-constant characters was firstly introduced in \cite{PZ}, 
where  finite quasi-simple groups have been studied. In particular,  the following result was obtained.

\begin{theorem}\cite[Theorem 1.5]{PZ}\label{simple}
Let $G$ be a finite simple group admitting an irreducible $p$-constant character $\chi$ for some prime $p$ dividing the order of $G$. Then, one of the following holds:
\begin{itemize}
\item[(i)] $c_\chi\in \{0,\pm 1\}$;
\item[(ii)] $G={\rm M}_{22}$, $p=3$, $\chi(1)=385$ and $c_\chi=-2$;
\item[(iii)] $G$ is a group of Lie type of characteristic $r\neq p$ with a non-cyclic Sylow $p$-subgroup. 
\end{itemize}
\end{theorem}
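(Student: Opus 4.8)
The proof must go through the classification of finite simple groups, treating each family and showing that, unless $G$ occurs in (ii) or (iii), one has $c_\chi\in\{0,\pm1\}$. First I would set up general reductions. Fix a Sylow $p$-subgroup $P\le G$. Writing the class function $\chi|_P$ as $a\,\rho_P+c_\chi\,1_P$, with $\rho_P$ the regular character of $P$, $p$-constancy (all nontrivial elements of $P$ are $p$-singular) gives $a=(\chi(1)-c_\chi)/|G|_p$; comparing coefficients at a nontrivial linear character of $P$ (one exists, since $P\neq1$) shows that $a$ is a non-negative integer, whence
\[
|G|_p\ \big|\ \chi(1)-c_\chi \qquad\text{and}\qquad \chi(1)\ \ge\ |G|_p+c_\chi ,
\]
the latter because either $\chi=1_G$ (and then $c_\chi=1$, done) or $\chi$ is faithful and $a\ge1$. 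In particular $c_\chi=0$ is exactly the $p$-defect zero case, already covered by (i); so from now on $c_\chi\ne0$ and $\chi$ has positive $p$-defect. Finally, the statement itself allows me to disregard the Lie type groups in non-defining characteristic with non-cyclic Sylow $p$-subgroup — that is precisely case (iii) — so in that family only cyclic Sylow subgroups need to be examined.

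If $P$ is cyclic, then $\chi$ belongs to a $p$-block $B$ of cyclic defect group $P$, and for a nontrivial $p$-element $u$ the value $\chi(u)$ is, up to sign, a sum of Brauer-character degrees of $C_G(u)$ determined by the Brauer tree of $B$; demanding that this quantity be independent of $u$ as $u$ runs over generators of all nontrivial subgroups of $P$ leaves, after a short analysis of the possible trees, only $c_\chi\in\{0,\pm1\}$. Thus every simple group with a cyclic Sylow $p$-subgroup lands in (i), up to finitely many small groups checked directly from their character tables. For the alternating groups $A_n$ I would use the Murnaghan--Nakayama rule, together with Clifford theory relating $A_n$ to $S_n$: constancy of $\chi_\lambda$ on the various $p$-singular classes of $S_n$ — for instance a single $p$-cycle fixing $n-p$ points, two disjoint $p$-cycles (or a $p^2$-cycle), and a $p$-cycle times a transposition — forces the partition $\lambda$ to be a hook or one of a few near-hook shapes, for all of which $c_\chi\in\{0,\pm1\}$; the descent to $A_n$, taking class splitting into account, gives the same bound, with small $n$ handled by hand. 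The sporadic groups and the Tits group are disposed of by direct inspection of their ordinary and $p$-modular character data, the only exception being the irreducible character of degree $385$ of $\mathrm{M}_{22}$ at $p=3$, which is $3$-constant with $c_\chi=-2$; this is case (ii).

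It remains to handle the groups of Lie type in their defining characteristic $p=r$. Here $P$ is a maximal unipotent subgroup $U$ and the $p$-singular elements are precisely those with nontrivial unipotent part, so we must control $\chi$ on such elements. The Steinberg character is of $p$-defect zero ($c_\chi=0$). For the remaining $\chi$ I would combine the restriction-to-$U$ identity above with standard input — degree formulas, Deligne--Lusztig induction, and Gelfand--Graev characters (which pin down values on regular unipotent elements) — to force $c_\chi\in\{0,\pm1\}$, finishing the low-rank and small-field cases by explicit computation in the known character tables ($\PSL_2(q)$, $\mathrm{GL}_n(q)$, $\mathrm{Sp}_4(q)$, the small exceptional groups). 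I expect this defining-characteristic analysis to be the crux of the whole argument: there is no single clean formula for $\chi$ on unipotent elements, $U$ is a large non-abelian $p$-group so the Sylow-restriction identity yields far less than in the abelian case, and ruling out $|c_\chi|\ge2$ uniformly over all Lie types and ranks is delicate — the remaining, more bookkeeping-style difficulty being the sheer number of Brauer trees and of small exceptional groups that must be inspected individually in the cyclic-defect, alternating, and sporadic parts.
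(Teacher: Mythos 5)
This theorem is not proved in the present paper at all: it is quoted verbatim from \cite[Theorem 1.5]{PZ}, so the ``proof'' here is the citation, and any comparison has to be with the strategy of that reference rather than with text in this article. With that caveat, your general reductions are correct and do appear in \cite{PZ}: writing $\chi|_P=a\rho_P+c_\chi 1_P$ for a Sylow $p$-subgroup $P$ is legitimate (every nontrivial element of $P$ is $p$-singular), it yields $|G|_p\mid \chi(1)-c_\chi$ and, via $Z(\chi)=1$ for a faithful character of a simple group, $a\geq 1$; and $c_\chi=0$ is exactly the defect-zero case. Your overall architecture --- CFSG, Murnaghan--Nakayama for the alternating groups, inspection for the sporadics with $\mathrm{M}_{22}$ at $p=3$ as the lone exception, values at regular unipotent elements via Gelfand--Graev theory in defining characteristic, and using clause (iii) to discard cross-characteristic groups with non-cyclic Sylow $p$-subgroups --- is the right one and matches the route of \cite{PZ}.

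However, as a proof the proposal has genuine gaps at precisely the two places where the real work lies. First, the cyclic-Sylow step: you assert that ``a short analysis of the possible trees'' leaves only $c_\chi\in\{0,\pm1\}$, but the values of a character in a cyclic block at $p$-singular elements are governed by \emph{generalized} decomposition numbers, which for the exceptional characters are sums of $p$-power roots of unity rather than signs; bounding the resulting integer constant by $1$ requires an actual argument (and is false for non-simple groups, cf.\ Lemma \ref{Q8} and the Frobenius example in Section \ref{Sec4}, so simplicity must enter somewhere). Second, the defining-characteristic case, which you yourself identify as the crux, is left entirely open: ``combine degree formulas, Deligne--Lusztig induction, and Gelfand--Graev characters to force $c_\chi\in\{0,\pm1\}$'' is a plan, not a proof, and the needed bound on $\chi(u)$ for $u$ regular unipotent depends on the (possibly disconnected) center of the ambient algebraic group, so it is not automatic for the simple quotients. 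Until these two steps are carried out, the argument is an outline of \cite{PZ} rather than an independent proof.
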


In the same paper, also the $p$-constant characters of the symmetric groups were classified. Starting from that result, in the present paper we study the other finite reflection groups as well as other notable classes of groups as nilpotent groups, direct and wreath products of groups. 

In Section \ref{Sec2} we prove in particular that any non-linear irreducible $p$-constant character of a finite nilpotent group must be of  $p$-defect zero.
In Section \ref{Sec3} we classify the finite reflection groups admitting 
$p$-constant characters  proving  the following results.

\begin{theorem}\label{th1}
Let $G$ be a finite reflection group. Then, for any prime $p$  dividing the order of $G$ and for any $\chi\in \varUpsilon_p(G)$ we have $c_\chi=\pm 1$.
\end{theorem}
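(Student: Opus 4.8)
The plan is to reduce to irreducible groups via the standard decomposition of a finite reflection group as a direct product of irreducible ones, then to handle each irreducible type separately. First I would observe that if $G = G_1 \times \cdots \times G_k$ and $\chi = \chi_1 \otimes \cdots \otimes \chi_k \in \Irr{G}$, then an element $g = (g_1,\dots,g_k)$ is $p$-singular exactly when some $g_i$ is $p$-singular; I would use this together with a general lemma (presumably established in Section~\ref{Sec2} on direct products) to show that $\chi$ is $p$-constant of nonzero $p$-defect only if exactly one factor $\chi_i$ is a nonlinear $p$-constant character of nonzero $p$-defect and all the others are of degree coprime to $p$ on the relevant factors, forcing $c_\chi = c_{\chi_i}$. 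Since the symmetric groups $\Sym{n}$ (type $A$) were already settled in \cite{PZ}, it remains to treat the Weyl groups of type $B_n/C_n$, $D_n$, the dihedral groups $I_2(m)$, the exceptional Weyl groups $E_6, E_7, E_8, F_4, G_2, H_3, H_4$, and the non-crystallographic cases that occur among finite reflection groups.

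For type $B_n$, the group is the wreath product $\Z_2 \wr \Sym{n}$, and for type $D_n$ it is an index-two subgroup; since the paper announces a classification of $p$-constant characters of complete monomial groups (wreath products with symmetric groups), I would invoke that machinery directly, using the explicit combinatorial description of $\Irr{\Z_2 \wr \Sym{n}}$ by pairs of partitions and the known formula for character values on elements of given cycle type, to check that whenever such a $\chi$ is $p$-constant with nonzero $p$-defect one gets $c_\chi = \pm 1$. The dihedral groups $I_2(m)$ are small enough to do by hand: the nonlinear irreducibles are the two-dimensional characters $\chi_j$, their degree is $2$, so the only primes of nonzero defect are odd primes $p \mid m$, and on a $p$-singular rotation $r^k$ one has $\chi_j(r^k) = 2\cos(2\pi jk/m)$, which is a rational integer only when it equals $0, \pm 1, \pm 2$; ruling out $\pm 2$ (that would force the element to be trivial) and handling the value $0$ (that is $p$-defect zero, excluded) leaves $c_\chi = \pm 1$.

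For the exceptional groups I would argue uniformly where possible and fall back on the known character tables otherwise. The key uniform input is that $c_\chi$ is a rational integer and that $\chi$ restricted to a Sylow $p$-subgroup $P$ contains the trivial character with some positive multiplicity; combined with the fact that for a reflection group the prime $p$ and the degrees involved are tightly constrained (the only primes dividing $|W|$ for exceptional $W$ are small), this limits the candidate pairs $(\chi, p)$ to a short finite list that I can inspect in the Atlas or via a character-theoretic computation. The main obstacle I anticipate is the wreath-product case in types $B_n$ and $D_n$: one must control the character values of $\Z_2 \wr \Sym{n}$ on all $p$-singular classes simultaneously, which requires either a clean inductive argument on $n$ (peeling off a $p$-cycle or a $p$-element in the base group and using Clifford theory / the branching rule) or a careful bound showing that a constant value other than $\pm 1$ on the whole $p$-singular set would force the bipartition labelling $\chi$ to be of a very restricted shape that then turns out to give $p$-defect zero. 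I would expect the $D_n$ analysis to inherit most of its content from $B_n$ via the restriction of characters along the index-two inclusion, with only the split classes requiring extra care.
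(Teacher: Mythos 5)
Your proposal follows essentially the same route as the paper: reduce to a single irreducible factor via the direct-product lemma, quote the symmetric-group result for type ${\rm A}$, treat type ${\rm B}_n$ through the wreath-product combinatorics, derive ${\rm D}_n$ by restriction from ${\rm B}_n$, and settle the dihedral and exceptional groups by direct computation (note ${\rm G}_2\cong {\rm I}_2(6)$ is already covered by the dihedral case). The one substantive step you leave open is the crux you yourself flag: the paper's mechanism for $\Cyc{2}\wr\Sym{n}$ is to evaluate $\chi^{(\alpha,\beta)}$ by Murnaghan--Nakayama on a few specific $p$-singular classes (of types $((t,pk),\emptyset)$ and $(\emptyset,(t,pk))$ where $n=pk+t$) so as to force $\beta=\emptyset$, whence $\chi$ is inflated from a $p$-constant character of $\Sym{n}$ and the type-${\rm A}$ classification applies; the endpoint is this inflation statement, not (as your second fallback predicts) that the restricted shape has $p$-defect zero. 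Two minor slips: in the direct-product reduction the complementary tensor factors must actually be trivial characters (constancy of $\varphi$ on all of $K$ forces $\varphi=1_K$), not merely of $p$-coprime degree; and in ${\rm I}_2(m)$ the value $2\cos(2\pi jk/m)=\pm 2$ does not force $r^k=1$ (e.g. $m=4$, $j=1$, $k=2$ gives $-2$), though your conclusion survives because the trace equals $2$ only on $\ker\rho_j$ while a generator $r$ is itself $p$-singular, and $\chi_j(r)>-2$ for $1\le j<m/2$.
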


\begin{theorem}\label{th2}
Let $G$ be a finite reflection group.
Then, $\varUpsilon_2(G)\neq \{1_G\}$  if and only if $G$ is a dihedral group of order $2n$ with $n$ odd. In this case, $\varUpsilon_2(G)$ consists of the two linear characters of $G$.
\end{theorem}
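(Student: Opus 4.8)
The ``if'' part is a direct check. Let $G$ be a dihedral group of order $2n$ with $n$ odd. Since every rotation in $G$ has order dividing the odd integer $n$, the set $\varSigma_2(G)$ consists precisely of the $n$ reflections of $G$. The trivial character and the sign character $\varepsilon$ take the constant values $1$ and $-1$ on these elements, and each has degree $1$, which is smaller than the order $2$ of a Sylow $2$-subgroup of $G$, hence has nonzero $2$-defect; so both lie in $\varUpsilon_2(G)$. On the other hand, each of the $(n-1)/2$ nonlinear irreducible characters of $G$ has degree $2$, equal to the order of a Sylow $2$-subgroup, hence is of $2$-defect zero and does not belong to $\varUpsilon_2(G)$ at all. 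Therefore $\varUpsilon_2(G)=\{1_G,\varepsilon\}$, as claimed.

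For the converse, suppose $\varUpsilon_2(G)\neq\{1_G\}$ and fix $\chi\in\varUpsilon_2(G)\setminus\{1_G\}$; by Theorem \ref{th1}, $c:=c_\chi\in\{1,-1\}$. A basic fact (the direct-product case of Section \ref{Sec2}) is that if $H=H_1\times H_2$ with $2\mid|H_1|$ and $2\mid|H_2|$, then $\varUpsilon_2(H)=\{1_H\}$: writing $\psi=\psi_1\times\psi_2\in\varUpsilon_2(H)$, for a $2$-singular $h_1\in H_1$ and arbitrary $h_2\in H_2$ the element $(h_1,h_2)$ is $2$-singular, so $\psi_1(h_1)\psi_2(h_2)=c_\psi\neq 0$; this forces $\psi_2$ to be constant on $H_2$, whence $\psi_2=1_{H_2}$, and symmetrically $\psi_1=1_{H_1}$. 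Now every nontrivial irreducible reflection group is generated by involutions and so has even order, so $G$ cannot be a nontrivial direct product of reflection groups; thus $G$ is an irreducible finite reflection group, and moreover the exceptional group $W(H_3)\cong\Alt{5}\times C_2$ is excluded outright by the fact above. It remains to treat the types $A_n$ ($n\ge 3$), $B_n$ ($n\ge 3$), $D_n$ ($n\ge 4$), $E_6$, $E_7$, $E_8$, $F_4$, $H_4$, and $I_2(m)$ ($m\ge 3$), bearing in mind that $I_2(3)=W(A_2)$, $I_2(4)=W(B_2)$, $I_2(6)=W(G_2)$, and that $C_2=W(A_1)$ is the smallest dihedral group of twice-odd order.

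Since reflections are involutions, $\chi(r)=c$ for every reflection $r$, and more generally $\chi(w)=c$ whenever $w$ is a product of pairwise orthogonal reflections, such a $w$ being again an involution. Diagonalising simultaneously the commuting involutions afforded by $k$ pairwise orthogonal reflections, a short eigenvalue count shows that the dimensions of the joint eigenspaces equal $2^{-k}(\chi(1)-c)$ away from the trivial joint eigenspace, whence $\chi(1)\equiv c\pmod{2^{k}}$. Taking $k$ maximal (it is $\geq 2$ for every irreducible $G$ of rank $\geq 2$ that is not dihedral of odd order, and may be taken as $7$, $8$, $4$ for $E_7$, $E_8$, $F_4$), this congruence together with $\chi(1)\mid|G|$ forces $\chi(1)=1$ for $F_4$, $E_7$, $E_8$; then $\chi$ is linear, and since $W^{\mathrm{ab}}$ has order $2$ for these groups $\chi=\varepsilon$, which is not $2$-constant because it takes the value $1$ on the even-order product of two orthogonal reflections. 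For $E_6$ and $H_4$ the congruence leaves only a short list of candidate degrees, and each candidate is discarded by reading its values on suitable $2$-singular classes off the character table. For the infinite families, $W(A_n)\cong\Sym{n+1}$ with $n\geq 3$ gives $\varUpsilon_2=\{1_G\}$ by the classification of $p$-constant characters of symmetric groups in \cite{PZ}; and $W(B_n)\cong C_2\wr\Sym{n}$ together with its index-two subgroup $W(D_n)$ gives $\varUpsilon_2=\{1_G\}$ (for $n$ large enough that the group is not itself dihedral) via the wreath-product / complete-monomial-group results of Section \ref{Sec2}, passing from $W(B_n)$ to $W(D_n)$ by Clifford theory. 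The only type for which the sign character survives is $I_2(m)$ with $m$ odd --- equivalently, the dihedral groups of order $2n$ with $n$ odd (the case $m=1$ being $C_2$) --- which is precisely the family in the statement.

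I expect the main obstacle to be the uniform treatment of types $B_n$ and $D_n$: one needs the wreath-product machinery of Section \ref{Sec2} to control how the irreducible characters of $C_2\wr\Sym{n}$ behave on the large and somewhat intricate set $\varSigma_2$ of $2$-singular elements, and then a Clifford-theoretic step to descend from $W(B_n)$ to $W(D_n)$. By comparison, type $A$ is entirely subsumed by \cite{PZ}, and the exceptional groups reduce to a finite (computer-assisted) verification once the congruence above has trimmed the list of candidate degrees.
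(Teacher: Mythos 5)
Your overall architecture coincides with the paper's: reduce to a single irreducible component via Proposition \ref{direct}, then run through the irreducible types, delegating type ${\rm A}$ to \cite{PZ} (Proposition \ref{pr:An}), type ${\rm B}$ to the wreath-product result (Theorem \ref{th:wr}/Corollary \ref{cor:Bn}), and ${\rm I}_2(m)$ to Propositions \ref{die} and \ref{linear}; your explicit verification of the ``if'' direction is correct and welcome. Your congruence $\chi(1)\equiv c_\chi\pmod{2^k}$, obtained by restricting $\chi$ to the elementary abelian group generated by $k$ pairwise orthogonal reflections, is valid and gives a genuinely different, more conceptual treatment of ${\rm E}_7$, ${\rm E}_8$, ${\rm F}_4$ than the paper's table lookup in Proposition \ref{ecc} --- though note that for every irreducible type containing $-1$ (so with centre of order $2$: ${\rm B}_n$, ${\rm D}_n$ with $n$ even, ${\rm E}_7$, ${\rm E}_8$, ${\rm F}_4$, ${\rm H}_3$, ${\rm H}_4$, ${\rm I}_2(m)$ with $m$ even) Lemma \ref{lm:center}(ii) forces $|G|_2=2$ and disposes of the case in one line, and that ${\rm E}_6$ and ${\rm H}_4$ still end in a character-table check, so computation is not eliminated. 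One slip: $W({\rm F}_4)^{\mathrm{ab}}\cong\Cyc{2}\times\Cyc{2}$ (two classes of reflections), so the nontrivial linear character is not unique; the fix is immediate, since a nontrivial $2$-constant linear character would need value $-1$ on every reflection (else the reflections lie in its kernel and generate $G$), contradicting its value $+1$ on the involution obtained as a product of two orthogonal reflections --- or just quote Proposition \ref{linear}.

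The genuine gap is type ${\rm D}_n$. Knowing $\varUpsilon_2(W({\rm B}_n))=\{1\}$ and invoking ``Clifford theory'' does not settle $W({\rm D}_n)$: the set $\varSigma_2(W({\rm D}_n))$ meets only the classes $C_{(\alpha,\beta)}$ of $W({\rm B}_n)$ with $\ell(\beta)$ even, and $\chi^{(\alpha,\beta)}$ and $\chi^{(\beta,\alpha)}$ agree on $W({\rm D}_n)$, so a restricted character could a priori be constant on $\varSigma_2(W({\rm D}_n))$ even though neither character upstairs is constant on $\varSigma_2(W({\rm B}_n))$. This is precisely why the paper's Proposition \ref{pr:Dn} carries out a dedicated Murnaghan--Nakayama computation using witness classes that actually lie in ${\rm D}_n$, and why its last step checks that the classes refuting constancy for the $\Sym{n}$-inflated characters do not consist solely of even parts. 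You correctly flag this as the main obstacle, but the proof as written does not close it; Lemma \ref{lm:center} rescues the case $n$ even, while $n$ odd still requires the explicit class-by-class argument.
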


The classification of $p$-constant characters for groups of type ${\rm B}_n$ relies on a combinatorial version of Murnaghan-Nakayama rule given in \cite{Bn}. The same techniques used here can be also applied for studying endotrivial modules of such groups (as done for symmetric groups in \cite{end}).
In Section \ref{Sec4} we present some open problems and conjectures that we hope can attract the 
interest on this subject. For any odd prime $p$, we construct a Frobenius group admitting an irreducible $p$-constant character $\chi$ with $c_\chi=2$. We also classify the $p$-constant characters of $\PSL_3(q)$ and 
$\PSU_3(q)$, proving that case (iii) of Theorem \ref{simple} contains an infinite number of groups. It is worth of observe that by \cite[Theorem 6.1]{LM} case (iii)  may contain only groups whose Sylow $p$-subgroups have rank $\leq 2$, with the possible exceptions of $\PSL_5(q)$, $\PSU_5(q)$ and ${\rm E}_8(q)$.

\subsection*{Notations} 

To avoid misunderstanding with the reflection groups, we denote the cyclic 
group of order $n$ by $\Cyc{n}$. By $\Alt{n}$ and $\Sym{n}$ we mean, respectively, the alternating and 
the symmetric group of degree $n$.
Given a finite group $G$ and a positive integer $n$, $G^n$ denotes the direct product 
of $n$ copies of $G$.
If $p$ is a prime, $|G|_p$ denotes the order of a Sylow $p$-subgroup of $G$.

\section{Linear characters and nilpotent groups}\label{Sec2}

Let $G$ be a finite group. Clearly, the principal character $1_G$ of $G$ is $p$-constant for any prime $p$ dividing the order of $G$ (i.e., $1_G \in \varUpsilon_p(G)$ whenever $|G|_p>1$).
More in general, for linear characters we prove the following.

\begin{proposition}\label{linear}
Let $\chi$ be a linear character of a finite group $G$. Then, $\chi$ is $p$-constant for
some prime $p$ if, and only if, one of the following cases occurs:
\begin{itemize}
\item[(i)] $c_\chi=+1$ and $\varSigma_p(G)\subseteq \ker(\chi)$;
\item[(ii)] $c_\chi=-1$, $p=2$ and  $G=\ker(\chi) \rtimes \langle x \rangle$, where $\ker(\chi)$ has 
odd order and $x$ is an involution.
\end{itemize}
\end{proposition}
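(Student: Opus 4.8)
The plan is to analyze the restriction of $\chi$ to the set $\varSigma_p(G)$ directly, using the fact (from \cite[Lemma 2.1]{PZ}) that the constant value $c_\chi$ is a rational integer, together with the elementary observation that a linear character takes values that are roots of unity. If $\varSigma_p(G)=\emptyset$ then $|G|_p=1$, contrary to our standing assumption, so $\varSigma_p(G)$ is nonempty and $c_\chi$ is a root of unity that is also a rational integer; hence $c_\chi=\pm 1$. This immediately splits the argument into the two cases appearing in the statement.

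First I would treat $c_\chi=+1$. Here $\chi(g)=1$ for every $p$-singular $g$, which says precisely that $\varSigma_p(G)\subseteq\ker(\chi)$; conversely, if $\varSigma_p(G)\subseteq\ker(\chi)$ then $\chi$ is $p$-constant with $c_\chi=1$ (note $1_G$ is the special case $\chi=1_G$). This case is essentially a tautology and needs no real work. The substance is in the case $c_\chi=-1$. Then $\chi(g)=-1$ for all $p$-singular $g$; in particular $\chi$ is not trivial, and since every $p$-singular element has a nontrivial $p$-part, I would first pin down the prime $p$. Pick a $p$-singular element $g$ and write $g=g_p g_{p'}$ for its $p$-part and $p'$-part; one can also feed in suitable powers of $g$, which are still $p$-singular as long as the $p$-part survives. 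Evaluating $\chi$ on such powers forces $\chi(g_p)$ to be a root of unity of $p$-power order equal to $-1$, which is only possible when $p=2$. So $p=2$, $\ker(\chi)$ has odd index $|G:\ker(\chi)|$ dividing $2$, hence index exactly $2$, and $G/\ker(\chi)\cong\Cyc{2}$.

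It remains to extract the semidirect-product structure. Since $G/\ker(\chi)\cong\Cyc 2$, there is an involution-image; I would show $\ker(\chi)$ has odd order: any element of even order lies in $\varSigma_2(G)$, hence in the fibre over $-1$, hence outside $\ker(\chi)$, so $\ker(\chi)$ contains no element of even order and $|\ker(\chi)|$ is odd. Then $|G|=2\,|\ker(\chi)|$ with $|\ker(\chi)|$ odd, so a Sylow $2$-subgroup of $G$ has order $2$, generated by some involution $x$, and $\langle x\rangle\cap\ker(\chi)=1$ for order reasons, giving $G=\ker(\chi)\rtimes\langle x\rangle$ by an order count. Conversely, if $G=N\rtimes\langle x\rangle$ with $|N|$ odd and $x$ an involution, the character of $G$ inflated from the nontrivial character of $G/N\cong\Cyc 2$ has kernel $N$; every $2$-singular element has nontrivial image in $G/N$, hence maps to $-1$, so this $\chi$ is $2$-constant with $c_\chi=-1$. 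The only mildly delicate point — the main obstacle, such as it is — is making the forcing $p=2$ airtight: one must be careful that the powers of $g$ used to isolate $\chi(g_p)$ really remain $p$-singular, i.e. that one does not accidentally kill the $p$-part. This is handled by taking the power $g^{|g|_{p'}}$, whose order is exactly $|g|_p>1$, so it is still $p$-singular and has $p'$-part trivial, and then $\chi(g^{|g|_{p'}})=c_\chi=-1$ is a primitive $(|g|_p)$-th root of unity to a power, forcing $|g|_p$ even and hence $p=2$.
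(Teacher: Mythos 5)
Your proposal follows the same route as the paper: $c_\chi=\pm1$ because a linear character takes root-of-unity values and $c_\chi\in\Z$; the case $c_\chi=+1$ is a tautology; and in the case $c_\chi=-1$ one forces $p=2$ and then extracts the index-two semidirect-product structure. The one defective step is the sentence asserting that $\ker(\chi)$ has ``odd index $|G:\ker(\chi)|$ dividing $2$'': as written this is self-contradictory, and, more to the point, the fact $|G:\ker(\chi)|=2$ is never actually derived even though you rely on it to get $|G|=2\,|\ker(\chi)|$. The repair is short and uses only ingredients you already have: $G/\ker(\chi)$ embeds in $\C^\times$, hence is cyclic of order $m=|G:\ker(\chi)|$; since $\ker(\chi)$ has odd order (your observation that elements of even order lie in $\varSigma_2(G)$ and hence outside the kernel, which does not depend on the index claim) and $2$ divides $|G|$, $m$ is even, so a generator of $G/\ker(\chi)$ lifts to an element $x\in G$ of even order; then $x\in\varSigma_2(G)$, so $\chi(x)=-1$, yet $\chi(x)$ generates the image of $\chi$, which has order $m$, forcing $m=2$. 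This is essentially how the paper closes the argument, by passing to $H=G/\ker(\chi)$ with its faithful linear character and noting that the generator must map to $-1$. Everything else in your write-up, including the careful $g^{|g|_{p'}}$ argument pinning down $p=2$ (which the paper merely asserts), is correct.
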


\begin{proof}
Since $\chi$ is linear, if $\chi$ is $p$-constant then  $c_\chi\in \Z$ and so $c_\chi=\pm 1$. Clearly, if $c_\chi=1$ then $\varSigma_p(G)\subseteq \ker(\chi)$. 
If $c_\chi=-1$ then $p=2$ and $\ker(\chi)$ has odd order. Working in $H=G/\ker(\chi)$, 
consider the linear character
$\bar \chi$ induced by $\chi$ on $H$. Since $\bar\chi$ is a faithful linear
character of $H$, then $H$ is cyclic, say $H=\langle \bar x\rangle$.
From  $\bar \chi(\bar x)=-1$ we get $\bar x^ 2\in \ker(\bar \chi)$ and so $|H|=2$.
Hence, $G=\ker(\chi) \rtimes \langle x \rangle$ for some involution $x$.
\end{proof}

We start our analysis considering finite $p$-groups and nilpotent groups. We first recall the following result.

\begin{lemma}\cite[Lemma 2.2]{PZ}\label{lm:center}
Let $G$ be a finite group and let $1_G\neq \chi\in \varUpsilon_p(G)$ for some prime $p$ 
dividing the order of $G$.  Then, one of the following holds:
\begin{itemize}
\item[(i)] $p$ does not divide $|Z(G)|$, $Z(G)\leq \ker(\chi)$ and the corresponding character 
$\bar \chi$ of $G/Z(G)$ 
is an irreducible $p$-constant character;
\item[(ii)] $p=2$, $|G|_2=2$ and $G=\ker(\chi)\times O_2(G)$.
\end{itemize} 
\end{lemma}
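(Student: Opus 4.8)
The plan is to exploit the central character of $\chi$ together with the defining property of $p$-constancy. Since $\chi$ is irreducible, a representation affording it sends each $z\in Z(G)$ to a scalar $\lambda(z)I$, where $\lambda$ is a linear character of $Z(G)$; hence $\chi|_{Z(G)}=\chi(1)\lambda$ and, more usefully, $\chi(zg)=\lambda(z)\chi(g)$ for all $z\in Z(G)$ and all $g\in G$. Everything hinges on whether $p$ divides $|Z(G)|$, and the two cases split exactly along conclusions (i) and (ii). Throughout I use $c_\chi\in\Z$ and $c_\chi\neq 0$, and $\chi\neq 1_G$.

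Suppose first $p\nmid|Z(G)|$, so $Z(G)$ is a $p'$-group. Since $p\mid|G|$ there is an element $h$ of order $p$, which lies in $\varSigma_p(G)$, so $\chi(h)=c_\chi$. For $z\in Z(G)$ the element $zh$ has order $\mathrm{lcm}(|z|,p)$ (as $\gcd(|z|,p)=1$), hence is $p$-singular, so $c_\chi=\chi(zh)=\lambda(z)\chi(h)=\lambda(z)c_\chi$; as $c_\chi\neq 0$ this forces $\lambda(z)=1$. Thus $\lambda=1_{Z(G)}$, so $Z(G)\leq\ker(\chi)$ and $\chi$ deflates to an irreducible character $\bar\chi$ of $G/Z(G)$ with $\bar\chi(gZ(G))=\chi(g)$. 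To see $\bar\chi$ is $p$-constant, note $p\mid|G/Z(G)|$ and that if $\bar g\in G/Z(G)$ is $p$-singular then $p$ divides $|\bar g|$, which divides the order of any preimage $g$, so $g\in\varSigma_p(G)$ and $\bar\chi(\bar g)=\chi(g)=c_\chi$. Since $|G|_p=|G/Z(G)|_p$ and $\bar\chi(1)=\chi(1)$, the non-zero $p$-defect is preserved, and we are in case (i).

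Now suppose $p\mid|Z(G)|$ and fix $z\in Z(G)$ of order $p$. For every $p'$-element $h$ of $G$, $zh$ is $p$-singular, so $\lambda(z)\chi(h)=\chi(zh)=c_\chi=\chi(z)=\lambda(z)\chi(1)$, whence $\chi(h)=\chi(1)$; thus every $p'$-element lies in $\ker(\chi)$. Writing each element of $G$ as the product of its commuting $p$- and $p'$-parts shows that every element of $H:=G/\ker(\chi)$ has $p$-power order, so by Cauchy's theorem $H$ is a $p$-group, non-trivial since $\chi\neq 1_G$, and $\chi$ deflates to a faithful irreducible character $\bar\chi$ of $H$. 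Moreover $\bar\chi$ is constant $=c_\chi$ on $H\setminus\{1\}$: for $\bar g\neq 1$, writing $g=g_pg_{p'}$ with $g_{p'}\in\ker(\chi)$ gives $\bar g=\overline{g_p}$ with $g_p$ a non-trivial $p$-element, so $\bar\chi(\bar g)=\chi(g_p)=c_\chi$.

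It remains to analyse a faithful irreducible character $\bar\chi$ of a non-trivial $p$-group $H$ that is constant $=c_\chi$ on $H\setminus\{1\}$; this rigidity step is the main obstacle, the rest being bookkeeping with the central character. Writing $\bar\chi=\frac{\chi(1)-c_\chi}{|H|}\rho_{\mathrm{reg}}+c_\chi 1_H$ and imposing $\langle\bar\chi,\bar\chi\rangle=1$ yields $\chi(1)^2=|H|(1-c_\chi^2)+c_\chi^2$; since $\chi(1)\geq|c_\chi|$ this is impossible unless $|c_\chi|=1$, in which case $\chi(1)=1$. A faithful linear character forces $H$ cyclic, and being constant on the non-identity elements of a cyclic $p$-group forces $|H|=2$, hence $p=2$ and $c_\chi=-1$. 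Then $\ker(\chi)$ is exactly the set of odd-order elements of $G$ (an even-order element would be $2$-singular with $\chi$-value $-1\neq 1$), so $\ker(\chi)$ has odd order and $|G|_2=[G:\ker(\chi)]=2$; the central involution of $Z(G)$ then generates a normal Sylow $2$-subgroup, which equals $O_2(G)$, giving $G=\ker(\chi)\times O_2(G)$, i.e. case (ii).
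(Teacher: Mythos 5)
Your proof is correct: the central-character identity $\chi(zg)=\lambda(z)\chi(g)$ cleanly splits the two cases according to whether $p$ divides $|Z(G)|$, and the rigidity computation $\chi(1)^2=|H|(1-c_\chi^2)+c_\chi^2$ from $\langle\bar\chi,\bar\chi\rangle=1$, combined with $\chi(1)\geq|c_\chi|$, correctly forces $|H|=2$ in the second case. The paper only cites this lemma from \cite{PZ} without reproducing a proof, but your argument is a complete and essentially standard derivation of the stated dichotomy.
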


As consequence of the previous lemma we get the following.

\begin{cor}\label{p-group}
Let $p$ be a prime and let $G$ be a finite $p$-group.
Then $\varUpsilon_p(G)\neq \{1_G\}$  if and  only if $|G|=p=2$.
\end{cor}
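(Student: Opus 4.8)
The plan is to apply Lemma~\ref{lm:center} and induct on $|G|$. Suppose $G$ is a finite $p$-group with $1_G\neq\chi\in\varUpsilon_p(G)$; we want to force $|G|=p=2$. Since $G$ is a $p$-group with $|G|>1$, the prime $p$ divides $|Z(G)|$, so alternative (i) of Lemma~\ref{lm:center} cannot occur. Hence alternative (ii) holds: $p=2$, $|G|_2=2$, and $G=\ker(\chi)\times O_2(G)$. But $G$ is a $2$-group, so $|G|=|G|_2=2$, which gives $|G|=p=2$ directly. For the converse, when $|G|=p=2$ the group has two linear characters, one of which is $1_G$ and the other takes value $-1$ on the unique involution, which is the only $2$-singular element; that nontrivial character lies in $\varUpsilon_2(G)$ with $c_\chi=-1$.

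So in fact no induction is needed — the corollary falls out of Lemma~\ref{lm:center} almost immediately once one observes that for a nontrivial $p$-group case (i) is vacuous because $p\mid|Z(G)|$. I would spell out this last point carefully, since it is the only real content: a finite $p$-group of order $>1$ has nontrivial centre, and that centre is itself a $p$-group, so $p$ divides $|Z(G)|$.

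The only mild subtlety I anticipate is making sure the statement "$|G|=p=2$" is read as the intended conjunction (i.e. $p=2$ and $|G|=2$), and confirming that in the degenerate smallest case the nontrivial linear character genuinely qualifies as $p$-constant of non-zero $p$-defect: its degree is $1 < 2 = |G|_2$, so it is not of $2$-defect zero, hence it belongs to $\varUpsilon_2(G)$ rather than being excluded. There is no hard step here; the corollary is a clean specialization of the preceding lemma.
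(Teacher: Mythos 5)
Your argument is correct and is exactly the intended one: the paper states the corollary as an immediate consequence of Lemma~\ref{lm:center}, and your observation that case (i) is ruled out because a nontrivial finite $p$-group has centre of order divisible by $p$ is the whole content. The verification of the converse for $|G|=p=2$ (the nontrivial linear character has degree $1<2=|G|_2$, hence non-zero defect) is also the right check.
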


\begin{proposition}\label{direct}
Let $H,K$ be two finite  groups and let $G=H\times K$.
Let $\chi\in \varUpsilon_p(G)$ for some prime
$p$ dividing the order of $G$. If  $p$ divides both $|H|$ and $|K|$, then 
$\chi=1_G$.
If $p$ divides $|H|$ but does not divide $|K|$, then $\chi=\psi\otimes 1_K$ for some $\psi\in \varUpsilon_p(H)$.
\end{proposition}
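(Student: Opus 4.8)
The plan is to start from the fact that an irreducible character of a direct product factors, so I may write $\chi=\psi\otimes\varphi$ with $\psi\in\Irr{H}$ and $\varphi\in\Irr{K}$, and then to exploit the description of $\varSigma_p(G)$ in terms of $\varSigma_p(H)$ and $\varSigma_p(K)$. The one elementary lemma I would isolate first is: \emph{an irreducible character of a finite group that is constant on the whole group is the principal character}. Indeed, if $\varphi\in\Irr{K}$ takes the constant value $c$ on $K$, then $c=\varphi(1)$ and $|K|=\sum_{k\in K}|\varphi(k)|^2=|K|\,|c|^2$, so $|c|=1$; since $\varphi(1)$ is a positive integer this forces $c=1$ and $\varphi=1_K$.

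Next I would record the shape of $\varSigma_p(G)$: since $(h,k)$ has order $\mathrm{lcm}(\mathrm{ord}(h),\mathrm{ord}(k))$, the element $(h,k)$ is $p$-singular exactly when $h$ or $k$ is $p$-singular. In particular, as $p\mid|H|$ we may fix a $p$-singular element $h_0\in H$, and then $(h_0,k)\in\varSigma_p(G)$ for every $k\in K$. Evaluating the $p$-constant condition on this family gives $\psi(h_0)\varphi(k)=c_\chi$ for all $k\in K$; because $\chi\in\varUpsilon_p(G)$ has non-zero $p$-defect we have $c_\chi\neq 0$, hence $\psi(h_0)\neq 0$, so $\varphi$ is constant on $K$ and therefore $\varphi=1_K$ by the lemma above. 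Thus in every case $\chi=\psi\otimes 1_K$, i.e. $\chi(h,k)=\psi(h)$.

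It remains to pin down $\psi$, and here the two hypotheses diverge. If $p$ divides $|H|$ but not $|K|$, then $\varSigma_p(G)=\varSigma_p(H)\times K$, so the $p$-constancy of $\chi$ is equivalent to $\psi$ being constant on $\varSigma_p(H)$ with the same constant $c_\chi\neq 0$; moreover $\chi(1)=\psi(1)$ and $|G|_p=|H|_p$, so $\chi$ has non-zero $p$-defect precisely when $\psi$ does, giving $\psi\in\varUpsilon_p(H)$, as claimed. If instead $p$ divides both $|H|$ and $|K|$, fix also a $p$-singular $k_0\in K$; then $(h,k_0)\in\varSigma_p(G)$ for every $h\in H$, so the $p$-constant condition yields $\psi(h)=c_\chi$ for all $h\in H$, whence $\psi$ is constant on $H$ and $\psi=1_H$ by the lemma. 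Therefore $\chi=1_G$.

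I do not expect a real obstacle in this argument; the only points requiring a little attention are the preliminary lemma on constant irreducible characters and the bookkeeping of $p$-defects, together with the (crucial) use of $c_\chi\neq 0$ — which is exactly the content of $\chi\in\varUpsilon_p(G)$ — to justify dividing by $\psi(h_0)$.
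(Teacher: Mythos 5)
Your proof is correct and follows essentially the same route as the paper: factor $\chi=\psi\otimes\varphi$, evaluate on the $p$-singular elements $(h_0,k)$ to force $\varphi=1_K$, and by symmetry force $\psi=1_H$ when $p$ also divides $|K|$. The only cosmetic differences are that you isolate the (correct) lemma that a constant irreducible character is principal and spell out the $p$-defect bookkeeping, both of which the paper leaves implicit.
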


\begin{proof}
Suppose that $p$ divides the order of $H$. Since $G$ is a direct product of groups,
$\chi=\psi\otimes \varphi$ for some $\psi\in \Irr{H}$ and $\varphi\in \Irr{K}$. Fix an element
$h\in \varSigma_p(H)$:  for any $k\in K$, we have $hk\in \varSigma_p(G)$ and
$\chi(hk)=\psi(h)\varphi(k)=c_\chi\neq 0$. Hence, $\varphi(k)$ has the same value for all $k\in K$ and so $\varphi=1_K$,
i.e. $\chi=\psi \otimes 1_K$. It follows that for any $h\in \varSigma_p(H)$ we have
$\chi(h)=\psi(h)=c_\chi$, proving that $\psi$ is $p$-constant with $c_\psi=c_\chi\neq 0$.

Now, assume that $p$ also divides the order of $K$. Proceeding as  before, by
symmetry we obtain that $\chi=1_H\otimes 1_K=1_G$.
\end{proof}

\begin{cor}\label{pr}
Let $G=H\times K$, where $H$ is a finite $p$-group and $K$ is a finite $r$-group for some primes $p\neq r$.
If there exists $1_G\neq \chi \in \varUpsilon_p(G)$, then $p=|H|=2$ and $\chi=\epsilon \otimes 1_K$, where $\epsilon$ is the non-principal character of $H$.
\end{cor}

\begin{proof}
By Proposition \ref{direct}, $\chi=\psi\otimes 1_K$ for some  $\psi\in \varUpsilon_p(H)$. Now, since $H$ is a $p$-group and $\chi\neq 1_{G}$, by Corollary
\ref{p-group}, we have $p=|H|=2$ and $\psi=\epsilon$.
\end{proof} 

\begin{cor}
Let $G$ be a finite nilpotent group and let $\chi\in \varUpsilon_p(G)$. Then $\chi$ is linear and either
$\chi=1_G$ or $p=|G|_2=2$.
\end{cor}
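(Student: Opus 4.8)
The plan is to reduce the nilpotent case to a direct product of $p$-groups and then invoke the corollaries already proved. A finite nilpotent group $G$ is the direct product of its Sylow subgroups, $G = P_1 \times \cdots \times P_k$ where $P_i$ is the Sylow $r_i$-subgroup for the distinct primes $r_1,\dots,r_k$ dividing $|G|$. Write $p = r_1$ say, so $G = P_1 \times K$ with $K = P_2 \times \cdots \times P_k$ a group of order coprime to $p$.

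\begin{proof}
Write $G=P\times K$, where $P$ is the Sylow $p$-subgroup of $G$ and $K$ is the (normal Hall) complement, so that $p$ divides $|P|$ but not $|K|$. By Proposition \ref{direct}, $\chi=\psi\otimes 1_K$ for some $\psi\in\varUpsilon_p(P)$. Since $P$ is a $p$-group, Corollary \ref{p-group} gives that either $\psi=1_P$, in which case $\chi=1_G$, or $p=|P|=2$ and $\psi$ is the non-principal (linear) character of $P$, in which case $\chi=\psi\otimes 1_K$ is linear and $p=|G|_2=|P|=2$. In both cases $\chi$ is linear, and either $\chi=1_G$ or $p=|G|_2=2$, as claimed.
\end{proof}

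The only genuine point to be careful about is the structural input that a finite nilpotent group splits as the internal direct product of its Sylow subgroups; given that, everything is an immediate application of Proposition \ref{direct} and Corollary \ref{p-group}. There is no real obstacle here: the work has already been done in the preceding results, and this corollary is just the assembly step. One could alternatively phrase it using Corollary \ref{pr} when $G$ has exactly two prime divisors, but the argument above via Proposition \ref{direct} handles an arbitrary number of primes uniformly.
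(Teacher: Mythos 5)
Your proof is correct and takes essentially the same approach as the paper: decompose the nilpotent group into its Sylow subgroups and combine the direct-product reduction with the $p$-group corollary. If anything, your route through Proposition \ref{direct} with $K$ taken to be the full Hall $p'$-complement is slightly cleaner than the paper's one-line appeal to Corollary \ref{pr}, since that corollary as literally stated only covers the case of two prime divisors.
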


\begin{proof}
The result follows from Corollary \ref{pr}, writing  $G$ as direct product  of its
Sylow $p_i$-subgroups.
\end{proof}

\section{Finite reflection groups}\label{Sec3}

In this section we consider finite groups generated by reflections (in real euclidean spaces). We will refer to \cite{H} for construction, properties and notation concerning this class of groups. We recall in particular that 
the finite reflection groups are precisely the finite Coxeter groups. Completely classified by
Coxeter himself, they are direct products of members of four infinite families (${\rm A}_n$, ${\rm 
B}_n$, 
${\rm D}_n$, ${\rm I}_2(n)$) and six `exceptional' groups (${\rm E}_6$, ${\rm E}_7$, ${\rm E}_8$, 
${\rm F}_4$, ${\rm H_3}$, ${\rm H}_4$). 
Therefore, by Proposition \ref{direct} we may restrict our attention to the `irreducible' finite reflection groups.

The $p$-constant characters of exceptional reflection groups can be classified using 
the character tables available, for instance, in \cite{GAP}. In the following proposition, $\chi_m$ denotes the $m$-th irreducible 
character, according to the ordering of \cite[CTblLib 1.2.2]{GAP}.

\begin{proposition}\label{ecc}
Let $G$ be a reflection group of type ${\rm E}_6, {\rm E}_7, {\rm E}_8, {\rm F}_4, {\rm H}_3$ 
or ${\rm H}_4$ and let $1_G\neq \chi_m \in \Irr{G}$.
Then $\chi_m\in \varUpsilon_p(G)$ if and only if one of the following cases occurs:
\begin{itemize}
\item[(i)] $G={\rm E}_6$, $p=5$ and $(m,\chi_m(1))\in \{(5,6), (13, 24), (23, 64), ( 25, 81)\}$;
\item[(ii)] $G={\rm E}_7$ and 
\begin{itemize}
\item[$p=5$:] $(m,\chi_m(1))\in \{(9, 56),( 11, 84), ( 18, 189), ( 22, 216)\}$ or
\item[$p=7$:] $(m,\chi_m(1))\in \{( 3, 15),( 6, 27),( 15, 120),(22,216), ( 28, 405),(
30,512)\}$;
\end{itemize}
\item[(iii)] $G={\rm E}_8$, $p=7$ and $(m,\chi_m(1))\in \{( 9, 50), (18, 300),( 36, 972),( 61, 3200),$ $( 62, 4096), ( 66$, $6075)\}$;
\item[(iv)] $G={\rm H}_3\cong \Cyc{2}\times \Alt{5}$ and 
\begin{itemize}
\item[$p=3$:] $(m,\chi_m(1))\in \{(4, 4),( 5, 5)\}$ or
\item[$p=5$:] $(m,\chi_m(1))=(4 , 4)$.
\end{itemize}
\end{itemize}
In all these cases we have $c_{\chi_m}=\pm 1$.
\end{proposition}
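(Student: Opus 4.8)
The plan is to reduce Proposition~\ref{ecc} to a finite computation with the known character tables. The six groups in question---$\mathrm{E}_6$, $\mathrm{E}_7$, $\mathrm{E}_8$, $\mathrm{F}_4$, $\mathrm{H}_3$, $\mathrm{H}_4$---are of bounded (indeed, fixed) size, and their full character tables together with the power maps are available in \cite{GAP}. So the first step is purely organizational: for each such $G$ and each prime $p$ dividing $|G|$, I would identify the set $\varSigma_p(G)$ of $p$-singular conjugacy classes directly from the power-map data in the CTblLib library. Since $c_\chi = 0$ corresponds exactly to $p$-defect zero characters (as recalled in the Introduction), and we are only interested in $\varUpsilon_p(G)$, i.e. characters of non-zero $p$-defect, I would at the outset discard every $\chi$ with $p \nmid \chi(1)_{p'}$-complement condition failing, equivalently keep only those $\chi$ with $\chi(1)_p < |G|_p$.

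The core step is then a finite loop: for each surviving non-principal $\chi \in \Irr{G}$, check whether $\chi$ is constant on the $p$-singular classes. Concretely, one reads off the values $\chi(g)$ for $g$ running over representatives of the classes in $\varSigma_p(G)$ and tests equality. Whenever equality holds, one records the common value $c_\chi$; by \cite[Lemma 2.1]{PZ} this value is automatically a rational integer, and one then simply observes from the table that in every surviving case $c_\chi \in \{+1,-1\}$, which gives the final sentence of the statement. For $\mathrm{H}_3 \cong \Cyc{2} \times \Alt{5}$ one can cross-check the $p=3$ and $p=5$ answers against Proposition~\ref{direct} and the known character theory of $\Alt 5$: a non-principal $\chi \in \varUpsilon_p(\mathrm{H}_3)$ must be of the form $\psi \otimes 1$ with $\psi \in \varUpsilon_p(\Alt 5)$, and the $p$-constant characters of $\Alt 5$ are elementary to list by hand (for $p=5$ the degree-$4$ character, which is constant with value $-1$ on the two classes of $5$-cycles; for $p=3$ the degree-$4$ and degree-$5$ characters). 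This provides an independent verification of part~(iv) that does not rely on the computer.

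The main (and essentially only) obstacle is not mathematical depth but bookkeeping reliability: one must make sure the indexing $\chi_m$ matches exactly the ordering fixed in \cite[CTblLib 1.2.2]{GAP}, that the power maps used to extract $\varSigma_p(G)$ are the ones attached to those tables, and that no $p$-singular class is overlooked (in particular classes of elements of order $p \cdot q$ for $q \neq p$, not just order-$p$ classes). For $\mathrm{E}_8$, with $112$ conjugacy classes and $112$ irreducible characters, and for $\mathrm{E}_7$ this is a genuinely large---though entirely routine---table scan, so the write-up would present it as a verification performed in \cite{GAP} rather than displayed in full, listing only the surviving pairs $(m, \chi_m(1))$ as in the statement. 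Note also that $\mathrm{F}_4$ and $\mathrm{H}_4$ appear in the hypothesis but contribute nothing: part of the computation is confirming that for these two groups no non-principal character lies in any $\varUpsilon_p(G)$, which is recorded implicitly by their absence from the list.
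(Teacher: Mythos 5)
Your proposal is correct and takes essentially the same approach as the paper, which likewise treats Proposition~\ref{ecc} as a finite verification against the character tables and power maps in \cite{GAP} and records only the surviving pairs $(m,\chi_m(1))$. Your independent hand check of part~(iv) via Proposition~\ref{direct} and the character table of $\Alt{5}$ is a reasonable extra safeguard but adds nothing beyond the table scan.
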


We now consider the four infinite families ${\rm A}_n$, ${\rm B}_n$, ${\rm D}_n$ and ${\rm I}_2(n)$.
Groups of type ${\rm A}_n$ are isomorphic to the symmetric group $\Sym{n+1}$.
The irreducible characters of $\Sym{n+1}$ are parametrized by the partitions $\lambda$ of $n+1$, see \cite{JK}. 
From \cite[Theorem 1.3, Lemma 3.1, Lemma 3.3 and Proposition 3.4]{PZ} we get the following result.

\begin{proposition}\label{pr:An}
Let $G$ be a group of type ${\rm A}_{n}$ with $n\geq 2$. 
Let $\chi_\lambda$ be the irreducible character of $G$ associated 
to the partition $\lambda$ of $(n+1)$. Suppose that  $\chi_\lambda$ is non-linear of non-zero 
$p$-defect. 
Then, $\chi$ is $p$-constant for some prime $p\leq n+1$ if, and only if, one of the following conditions holds:
\begin{itemize}
\item[(i)] $n=p-1\geq 2$ and $\lambda=(b,1^{p-b})$, where $2\leq b\leq p-1$;
\item[(ii)] $n=p\geq 3$ and $\lambda=(b+1,2,1^{p-b-2})$, where $1\leq b \leq p-2$;
\item[(iii)] $n+1=p+r\geq 5$, $2\leq r\leq  p$ and $\lambda$ is one of the following partitions
$$(p-a,r+1,1^ {a-1}),\;\; 1\leq a\leq p-r-1; \qquad (r,b,1^ {p-b}),\;\; 1\leq b \leq r.$$
\end{itemize}
In all these cases $c_{\chi_{\lambda}}=\pm 1$.
\end{proposition}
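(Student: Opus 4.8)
The plan is to reduce the statement entirely to results already proved in \cite{PZ} about $p$-constant characters of symmetric groups, since a reflection group of type ${\rm A}_n$ is literally $\Sym{n+1}$. Recall that \cite[Theorem 1.3]{PZ} gives the full list of $p$-constant irreducible characters $\chi_\lambda$ of $\Sym{n+1}$ of non-zero $p$-defect: such a character exists only when $n+1$ is not too much larger than $p$, and the relevant partitions $\lambda$ are described there (together with the constant $c_{\chi_\lambda}$, which is always $\pm1$). The three cases (i), (ii), (iii) of the Proposition are precisely a re-indexing of that list: case (i) corresponds to $n+1=p$ (hook partitions $(b,1^{p-b})$), case (ii) to $n+1=p+1$, and case (iii) to $n+1=p+r$ with $2\le r\le p$. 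So the first step is to state \cite[Theorem 1.3]{PZ} in full and to match its conclusion term-by-term with the three displayed families here, checking that no partition is omitted or double-counted and that the degree formulas quoted in (i)--(iii) agree with the hook-length formula applied to those partitions.

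The second step is to incorporate the hypothesis that $\chi_\lambda$ is \emph{non-linear}. The two linear characters of $\Sym{n+1}$ are $1_G$ (the trivial partition $(n+1)$) and the sign character ${\rm sgn}$ (the partition $(1^{n+1})$); these are handled separately — $1_G$ is excluded because we only consider $\varUpsilon_p(G)\setminus\{1_G\}$, and ${\rm sgn}$ is $p$-constant for odd $p$ only when $p$ does not divide $|\varSigma_p(G)|$, which never happens, while for $p=2$ it falls under Proposition \ref{linear}(ii) but does not occur for $\Sym{n+1}$ with $n\ge 2$ since $\Sym{n+1}$ is not of the form (odd) $\rtimes\langle x\rangle$. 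In any case, none of the partitions in (i)--(iii) is $(n+1)$ or $(1^{n+1})$ (one should verify this: in (i) one needs $b\le p-1$ precisely to exclude $(1^p)$ and $b\ge2$ to exclude $(p)$; similar range restrictions do the same job in (ii) and (iii)), so the non-linearity hypothesis is compatible with the list and costs nothing.

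The third step is to deal with the non-zero $p$-defect hypothesis, which simply removes the characters of $p$-defect zero (the $c_{\chi_\lambda}=0$ case) from the count; \cite[Lemma 3.1, Lemma 3.3 and Proposition 3.4]{PZ}, cited in the statement, are exactly the technical lemmas in \cite{PZ} that separate the defect-zero characters from the genuine $\varUpsilon_p$ ones and that pin down which partitions survive. So the proof amounts to quoting Theorem 1.3, Lemma 3.1, Lemma 3.3 and Proposition 3.4 of \cite{PZ}, observing that $G\cong\Sym{n+1}$, and transcribing the resulting list into the indexing $(n+1)=p$, $(n+1)=p+1$, $(n+1)=p+r$. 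The only real work — and the part most prone to slips — is the bookkeeping: making sure the partition families and their parameter ranges in (i)--(iii) are an exact, non-redundant translation of what \cite{PZ} produces, and that $c_{\chi_\lambda}=\pm1$ in every surviving case (which is already part of the conclusion of \cite[Theorem 1.3]{PZ}, so it transfers directly). I expect no genuine mathematical obstacle here; the statement is essentially a corollary of the symmetric-group classification already in the literature, restated in reflection-group language.
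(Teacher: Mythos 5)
Your proposal matches the paper exactly: the paper offers no independent proof of this proposition, stating only that it follows from \cite[Theorem 1.3, Lemma 3.1, Lemma 3.3 and Proposition 3.4]{PZ} upon identifying a group of type ${\rm A}_n$ with $\Sym{n+1}$, which is precisely your reduction. One small correction to an aside that does not affect the proposition (since linear characters are excluded by hypothesis): your claim that the sign character is never $p$-constant is wrong --- the paper notes immediately after the proposition that $\chi_{(1^n)}$ of $\Sym{n}$ is $p$-constant exactly when $n=p$ or $n=p+1$, e.g.\ all $p$-singular elements of $\Sym{p}$ are $p$-cycles, hence even permutations.
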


Applying Lemma \ref{linear}, it is easy to see that the linear character 
$\chi_{(1^n)}$ of $\Sym{n}$ is $p$-constant if and only if $n=p, p+1$.
\smallskip

Finite reflection groups of type ${\rm B}_{n}$, $n\geq 2$, are isomorphic to the wreath products 
$\Cyc{2}\wr 
\Sym{n}$. We then consider the more general case of wreath products $G=H\wr 
\Sym{n}=H^n\rtimes \Sym{n}$, where $H$ is any finite group. The classification of the 
irreducible $p$-constant 
 characters of $H\wr 
\Sym{n}$ 
(which is sometimes called the complete monomial group) shall be obtained 
applying the combinatorial version of the Murnaghan-Nakayama rule given in \cite{Bn}. We recall 
here some of the basic properties of $G$ with respect to conjugacy classes and irreducible 
characters. For further details, see \cite{Bn}.

We can think $G$ as the group of the $n\times n$ monomial matrices whose entries 
are elements of $H$.  
In particular, if $\pi$ is the $k$-cycle $(i_1,i_2,\ldots,i_k)$ of $\Sym{n}$, we 
denote by $(h_1i_1, h_2 i_2,\ldots , h_ki_k)$ the permutation matrix corresponding to the cycle
$\pi$ whose entries $1$ are replaced by the elements $h_1,\ldots,h_k\in H$. If 
$\mathcal{C}_1=\{1\},\mathcal{C}_2,\ldots,\mathcal{C}_r$ are the $r$ conjugacy classes of $H$, an
element  $(h_1 i_1,h_2 i_2,\ldots , h_k i_k)$ is said to be a $\mathcal{C}_j$-cycle if 
$h_k\cdots h_2h_1\in \mathcal{C}_j$.

If $\gamma=(\gamma_1,\ldots,\gamma_t)$ is a partition of $n$, we  write $\gamma\vdash n$ (or 
$|\gamma|=n$) and 
$\ell(\gamma)=t$ for the number of parts of $\gamma$. By $\vet{\gamma}$ we  denote a 
$r$-tuple $(\gamma^1,\ldots,\gamma^r)$ of partitions $\gamma^i$ such that
$\sum_i |\gamma^i|=n$ (we use $\vet{\gamma}\vdash n$ to emphasize the integer $n$). 
We also
allow that some of the components of $\vet{\gamma}$ can be the empty partition $\emptyset$.
Then, the set of conjugacy classes of $G$ is given by $\{C_{\vet{\gamma}} : 
\vet{\gamma}\vdash n\}$, 
where $C_{\vet{\gamma}}$ is the set of elements of $G$ whose $\mathcal{C}_j$-cycles have a cyclic 
decomposition $\gamma^j$, for all $j=1,\ldots,r$. Similarly, we have 
$\Irr{G}=\{\chi^{\vet{\gamma}}: 
\vet{\gamma} \vdash n\}$.
In particular, the irreducible characters of $G$ obtained by inflation from $\Irr{\Sym{n}}$ are the 
characters $\chi^{\vet{\gamma}}$, where $\vet{\gamma}=(\gamma,\emptyset,\ldots,\emptyset)$ with 
$\gamma\vdash n $ (so, $1_G=\chi^{((n),\emptyset,\ldots,\emptyset)}$).

The Murnaghan-Nakayama rule gives the value $\chi_{\vet{\delta}}^{\vet{\chi}}$ of the irreducible 
character $\chi^{\vet{\gamma}}$ on the conjugacy  class $C_{\vet{\delta}}$. Following \cite{Bn}, 
given $\vet{\gamma}\vdash n$, we denote by $\star\vet{\gamma}=\gamma^1\star\ldots\star \gamma^r$  
the skew 
shape formed by placing the Ferrers diagrams of 
$\gamma^1,\ldots,\gamma^r$ corner to corner (the Ferrers diagram of a partition $\gamma$ is 
constructed 
taking $\gamma_1$ boxes in the first row from the top, $\gamma_2$ boxes in the second row and so 
on).
Let $T$ be a $\star$-rim hook tableau of shape $\vet{\gamma}$ and type $\vet{\delta}$, i.e. a rim 
hook tableaux of shape $\star \vet{\gamma}$,  where the lengths of the rim hooks are found in the 
parts of the components of $\vet{\delta}$.
Let $\eta^{j,i}(T)$ be the partition formed by the lengths of the rim hooks of $\delta^j$ placed in 
the Ferrers diagram of $\gamma^i$ in $T$.
Set 
$$\star\omega(T)=\mathrm{sgn}(T) \prod_{i,j}^ r  (\psi_j^i)^{\ell(\eta^{j,i}(T))} ,$$
where $\psi_j^i$ is the value of the $i$-th irreducible character of $H$ on the class 
$\mathcal{C}_j$ (taking $\psi^1=1_H$)
and $\mathrm{sgn}(T)$ is the product of the signs of the rim hooks in $T$ (i.e., $(-1)^{t-1}$ where 
$t$ is 
the number of rows occupied by the rim hook).
Then 
$$\chi^{\vet{\gamma}}_{\vet{\delta}}=\sum \star \omega(T),$$
where the sum runs over all 
$\star$-rim hook tableaux of shape $\vet{\gamma}$ 
and type 
$\vet{\delta}$.

\begin{theorem}\label{th:wr}
Let $H$ be a finite group. An irreducible character $\chi$ of $H\wr \Sym{n}$ of 
non-zero $p$-defect is $p$-constant if and only if $p$ does not divide $|H|$, 
$H^n\leq \ker(\chi)$ and the corresponding character $\bar \chi$ of $\Sym{n}$ is 
$p$-constant.
\end{theorem}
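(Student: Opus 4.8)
The \emph{``if''} direction is the easy one. Assume $p\nmid|H|$, $H^n\le\ker(\chi)$ and that $\bar\chi$ is $p$-constant, so that $\chi$ is the inflation of $\bar\chi$ along the quotient $G\to G/H^n\cong\Sym{n}$. Since $|H^n|=|H|^n$ is prime to $p$, for every $g\in G$ the order of $g$ is divisible by $p$ exactly when the order of $gH^n$ is; hence $\varSigma_p(G)$ is precisely the full preimage of $\varSigma_p(\Sym{n})$. Consequently $\chi(g)=\bar\chi(gH^n)$ takes the constant value $c_{\bar\chi}$ on $\varSigma_p(G)$, and since $\chi(1)=\bar\chi(1)$ while $|G|_p=|\Sym{n}|_p$, the character $\chi$ has non-zero $p$-defect; thus $\chi\in\varUpsilon_p(G)$.

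For the converse, let $\chi=\chi^{\vet\gamma}\in\varUpsilon_p(G)$; we may assume $\chi\neq1_G$. The plan is to test $\chi$ on the $p$-singular conjugacy classes of $G$ whose underlying permutation is a single $p$-cycle together with $n-p$ fixed points, letting the $p$-cycle carry each of the conjugacy classes $\mathcal{C}_1,\dots,\mathcal{C}_r$ of $H$ in turn (each such class consists of $p$-singular elements, since the $p$-cycle already forces $p$ into every order). By the combinatorial Murnaghan--Nakayama rule of \cite{Bn}, the value of $\chi^{\vet\gamma}$ on such a class has the shape $\sum_i a_i\,\psi_j^i$, where $i$ ranges over the components $\gamma^i$ of $\vet\gamma$, $\psi^i$ runs through $\Irr{H}$ (with $\psi^1=1_H$), the index $j$ records which class of $H$ is placed on the $p$-cycle, and $a_i$ is, up to a positive scalar, a Murnaghan--Nakayama evaluation attached to $\gamma^i$ (for this family it is essentially $\chi^{\gamma^i}$ on a $p$-cycle of $\Sym{|\gamma^i|}$, read as $0$ when $|\gamma^i|<p$). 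As $\chi$ is $p$-constant, $\sum_i a_i\psi_j^i$ is independent of $j$, so $\sum_i a_i\psi^i$ is a constant class function on $H$; the linear independence of $\Irr{H}$ then forces $a_i=0$ for every $i$ with $\psi^i\neq1_H$. Repeating this over the analogous $p$-singular classes with longer $p$-divisible cycles or with extra fixed points and bootstrapping, one obtains that $\chi^{\gamma^i}$ vanishes on \emph{all} $p$-singular classes of $\Sym{|\gamma^i|}$ for each $i\neq1$, i.e. $\gamma^i$ is empty or a $p$-core; a finer comparison of the common value $c_\chi\neq0$ then excludes the latter, so $\vet\gamma=(\gamma^1,\emptyset,\dots,\emptyset)$, that is, $H^n\le\ker(\chi)$.

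It remains to see that $p\nmid|H|$ and that $\bar\chi=\chi^{\gamma^1}$ is $p$-constant. If $p\mid|H|$, fix $c\in H$ of order $p$; for every $\tau$ in the stabiliser $\Sym{n-1}$ of the coordinate carrying $c$, the element $(c,1,\dots,1)\tau$ is $p$-singular, so $\chi$ equals $c_\chi$ on it, which (since $H^n\le\ker(\chi)$) reads $\bar\chi(\tau)=c_\chi=\bar\chi(1)$; hence $\Sym{n-1}\le\ker(\bar\chi)$, and as $\Sym{n-1}$ contains a transposition and the transpositions generate $\Sym{n}$ this yields $\bar\chi=1_{\Sym{n}}$, i.e. $\chi=1_G$, a contradiction (the case $n=2$ is checked by hand on the element $(c,1)(1\,2)$). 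Therefore $p\nmid|H|$, and then, exactly as in the ``if'' direction, $\varSigma_p(G)$ is the full preimage of $\varSigma_p(\Sym{n})$, so $p$-constancy of $\chi$ is equivalent to $p$-constancy of $\bar\chi$, while $\bar\chi(1)=\chi(1)$ and $|\Sym{n}|_p=|G|_p$ give that $\bar\chi$ has non-zero $p$-defect. The heart of the argument — and the step where \cite{Bn} is genuinely needed — is the second paragraph: the combinatorial bookkeeping in the (coloured) Murnaghan--Nakayama rule, and in particular the elimination of small or $p$-core components $\gamma^i$, is the delicate part, whereas the reductions around it are soft.
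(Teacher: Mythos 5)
Your ``if'' direction is fine, and your final observation that once $p\nmid|H|$ the set $\varSigma_p(G)$ is the full preimage of $\varSigma_p(\Sym{n})$ is correct. The converse, however, has two genuine problems. First, there is a circularity in your treatment of the case $p\mid|H|$: you derive $H^n\leq\ker(\chi)$ in your second paragraph by testing against classes built from $p$-cycles of $\Sym{n}$, which presupposes $p\leq n$; but if $p\mid|H|$ and $p>n$ there are no such classes at all (the $p$-singular elements arise only from the colours), so your second paragraph produces nothing, and your third paragraph cannot then invoke $H^n\leq\ker(\chi)$ to rule this case out. The paper avoids this by handling $p\mid|H|$ \emph{first}, testing $\chi^{\vet{\gamma}}$ on the class of a single $n$-cycle coloured by a $p$-singular class $\mathcal{C}_j$ of $H$ (and on an $(n-1)$-cycle plus a coloured fixed point); non-vanishing on the full $n$-cycle immediately forces $\vet{\gamma}$ to be a single hook in one component, after which a short comparison of values yields $\chi=1_G$ or the residual cases $\Cyc{2}\wr\Sym{2}$, $\Cyc{2}\wr\Sym{3}$. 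Your third-paragraph argument (that $\Sym{n-1}\leq\ker(\bar\chi)$ forces $\bar\chi$ trivial) is pleasant, but it only works after the hard part is done and only when $p\le n$ is already available.

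Second, and more seriously, the heart of the converse --- showing that $\vet{\gamma}$ has a unique non-empty component and that it sits in position $1$ --- is only sketched. Your test classes (one $p$-cycle plus $n-p$ fixed points) give values of the form $\sum_i a_i\psi^i_j$ only after an identification of the coefficients $a_i$ that you assert rather than prove, and even granting it, $a_i=0$ for $i\neq1$ tells you only that $\chi^{\gamma^i}$ vanishes on certain classes of $\Sym{|\gamma^i|}$; the passage from there to ``$\gamma^i$ is empty or a $p$-core'' and the subsequent exclusion of non-empty $p$-cores are dismissed as ``bootstrapping'' and ``a finer comparison,'' which is precisely where the work lies. The paper's choice of test classes is what makes the argument short: writing $n=pk+t$, it evaluates on $C_{\vet{\alpha}}$ with $\vet{\alpha}=((t,pk),\emptyset,\dots,\emptyset)$ and its variants $\vet{\alpha_s},\vet{\beta_s}$. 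Because every cycle in these classes is long, a single rim hook must fill an entire component of $\star\vet{\gamma}$, so non-vanishing instantly forces $\vet{\gamma}$ to have at most two non-empty components, each a hook of length $pk$ or $t$; comparing the values over $s=2,\dots,r$ then pins both components to position $1$. If you want to complete your proof along your own lines you would need to actually carry out the bootstrapping; it is far easier to switch to classes whose cycles are all of length at least $p$ (or $t$), as the paper does.
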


\begin{proof}
Let $\chi=\chi^{\vet{\gamma}}\in \varUpsilon_p(G)$ for 
some prime $p$ dividing the order of $G=H\wr \Sym{n}$.
We want to prove that $p\leq n$ and  $\vet{\gamma}=(\gamma,\emptyset,\ldots,\emptyset)$ for some 
$\gamma \vdash n$. 

Assume first that $p$ divides the order of $H$. 
For any conjugacy class $\mathcal{C}_j$  of $H$ contained in $\varSigma_p(H)$, 
consider the conjugacy classes $C_{\vet{\delta_j}}, C_{\vet{\eta_{j,k}}}$ of $G$, where 
$\vet{\delta_j}$ is the $r$-tuple whose unique non-empty component is the partition $(n)$ at 
$j$-th position
and $\vet{\eta_{j,k}}$ is the $r$-tuple  having only two non-empty components: the partition 
$(n-1)$ at $j$-th 
position and the partition $(1)$ at $k$-th  position ($j\neq k$). 
Clearly, $C_{\vet{\delta_j}}, C_{\vet{\eta_{j,k}}} ,\subseteq \varSigma_p(G)$.
Since $\chi^{\vet\gamma}$ is non-zero on $\vet{\delta_j}$, then $\vet{\gamma}$ has 
a 
unique non-empty component, say at position $i$: $\vet{\gamma}=\left( 
\emptyset,\ldots,\emptyset,\gamma,\emptyset,\ldots,\emptyset\right)$,
where $\gamma\vdash n$ must be a hook,  i.e. $\gamma=(1^a, n-a)$ for some $0\leq a\leq n-1$.
Furthermore, $\chi^{\vet\gamma}_{\vet{\delta_j}}=(-1)^a \psi^i_j$ and so the character
$\psi^i$ is a $p$-constant character of $H$. 

Now, suppose $1\leq a\leq n-2$. Then 
$$\chi_{\vet{\eta_{j,1}}}^{\vet{\gamma}}=(-1)^{a-1} \psi_1^i\psi_j^i +(-1)^a\psi_1^i\psi_j^i=0,$$ 
a contradiction.
If $\gamma=(1^n)$, then  
$\chi_{\vet{\delta_j}}^{\vet{\gamma}}=(-1)^{n-1}\psi_j^i=\chi_{\vet{\eta_{j,1}}}^{\vet{\gamma}}
=(-1)^ { n-2 }
\psi_1^i\psi_j^i$ implies that $\psi_1^i=-1$ is the degree of $\psi^i$, an absurd.
So, $\gamma=(n)$. We get 
$\chi_{\vet{\delta_j}}^{\vet{\gamma}}=\psi_j^i=\psi_1^i\psi_j^i$ and so $\psi^i$ is linear.
Now,  $\chi_{\vet{\mu_{j,k}}}^{\vet{\gamma}}=
\psi_k^i \psi_j^i$ implies $\psi_k^i=1$ for all $k\neq j$.
It follows that either $\psi^i=1_H$ (and so $\chi=1_G$) or $p=2$ and $G=\Cyc{2}\wr 
\Sym{n}$. 

Hence, we may now assume $p\leq n$. Our next goal is to prove that $\vet{\gamma}$ has a 
unique non-empty component. The choice of the conjugacy classes $C_{\vet{\delta}}$ defined below is 
functional also for proving Proposition \ref{pr:Dn}.
Assume that $\vet{\gamma}$ has at least two non-empty components and write $n=pk+t$ with $0\leq t<p$.
If $t=0$, take $\vet{\alpha}=((pk), \emptyset,\ldots,\emptyset)$. Then $C_{\vet{\alpha}}\subseteq 
\varSigma_p(G)$, but $\chi^{\vet{\gamma}}_{\vet{\alpha}}=0$, a contradiction.
It $t>0$, take the classes of $p$-singular elements of $G$ corresponding to the following $r$-tuples:
$$\vet{\alpha_1}=\left((t,pk),\emptyset,\ldots,\emptyset\right), 
\vet{\alpha_2}=\left( (t), (pk),\emptyset,\ldots,\emptyset\right),$$
$$\vet{\alpha_3}=\left( (t),\emptyset, 
(pk),\emptyset,\ldots,\emptyset\right),\ldots,
\vet{\alpha_r}=\left( (t),\emptyset,\ldots,\emptyset,(pk)\right);$$
$$\vet{\beta_2}=\left(\emptyset,(t,pk),\emptyset,\ldots,\emptyset\right), 
\vet{\beta_3}=\left( 
\emptyset,\emptyset,(t,pk),\emptyset,\ldots,\emptyset\right),\ldots,\vet{\beta_r}=\left( 
\emptyset,\ldots,\emptyset,(t,pk)\right).$$
From  $\chi^{\vet{\gamma}}_{\vet{\alpha_1}}\neq 0$ it follows that $\vet{\gamma}$ has at most  
two non-empty components. Then $\vet{\gamma}$ has exactly two non-empty components, say  in position 
$i$ and $j$,  that must be hooks of length, respectively,
$pk$ and $t$.
For any $s=2,\ldots,r$ we have
$$\chi^{\vet{\gamma}}_{\vet{\alpha_1}}=\epsilon \psi_1^i \psi_1^j= 
\chi^{\vet{\gamma}}_{\vet{\alpha_s}}=\epsilon \psi_s^i\psi_1^j\neq 0,$$
for some $\epsilon=\pm 1$, and so $\psi^i=1_H$ (i.e. $i=1$).
Now, 
$$\chi^{\vet{\gamma}}_{\vet{\alpha_1}}=\epsilon \psi_1^j= 
\chi^{\vet{\gamma}}_{\vet{\beta_s}}=\epsilon  \psi_s^j$$
for all $s=2,\ldots,r$ also implies $\psi^j=1_H$ and $j=1$, a contradiction. 

We conclude that  $\vet{\gamma}$ has a unique non-empty component, $\gamma\vdash n$ 
at the $i$-th position. We prove now that actually $i=1$.
For all $s=2,\ldots,r$ we have
$$\chi^{\vet{\gamma}}_{\vet{\alpha_1}}=\nu \psi_1^i \psi_1^i= 
\chi^{\vet{\gamma}}_{\vet{\alpha_s}}=\nu \psi_s^i\psi_1^i\neq 0,$$
for some $\nu\in \{\pm 1,\pm 2\}$,
which implies $\psi^i=1_H$ and so 
$\vet{\gamma}=(\gamma,\emptyset,\ldots,\emptyset)$ for some partition $\gamma$ of $n$.
As observed before, in this case $H^n\leq \ker(\chi)$ and the corresponding character $\bar \chi$
must be a $p$-constant character of $\Sym{n}$.

We are left to consider the case when $p=2$ divides $|H|$ and $\bar \chi$ is a $2$-constant character of $\Sym{n}$: this happens only when 
$n=2,3$ and $\bar \chi$ is linear.
However,  $G=\Cyc{2}\wr \Sym{2}$ is a $2$-group and by  Lemma \ref{p-group}, $\chi=1_G$.
Also $G=\Cyc{2}\wr \Sym{3}$ is isomorphic to $\Cyc{2}\times \Sym{4}$ and, by Proposition 
\ref{direct}, we get $\chi=1_G$.
\end{proof}

\begin{cor}\label{cor:Bn}
Let $G$ be a group of type ${\rm B}_{n}$ with $n\geq 2$. 
Let $\chi$ be an irreducible character of $G$  of non-zero 
$p$-defect. 
Then, $\chi$ is $p$-constant for some prime $p\leq n$ if and only if $p>2$, $\Cyc{2}^n\leq 
\ker(\chi)$ and the corresponding character $\bar \chi$ of $\Sym{n}$ is $p$-constant.
\end{cor}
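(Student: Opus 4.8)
The plan is to read off this statement directly from Theorem \ref{th:wr} by specializing $H = \Cyc{2}$. Indeed, a finite reflection group $G$ of type $\mathrm{B}_n$ with $n \geq 2$ is isomorphic to the wreath product $\Cyc{2}\wr \Sym{n}$, so Theorem \ref{th:wr} applies word for word to the irreducible characters of $G$. The only genuine task is a short dictionary: the condition ``$p$ does not divide $|H|$'' in Theorem \ref{th:wr} becomes ``$p$ does not divide $|\Cyc{2}| = 2$'', that is, ``$p$ is odd'', that is, ``$p > 2$''; and $H^n$ becomes $\Cyc{2}^n$.

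Before invoking Theorem \ref{th:wr} I would dispose of the size constraints. First, if $p \leq n$ (with $n \geq 2$) then $p$ divides $n!$, hence $p$ divides $|G| = 2^n\, n!$, so $\varSigma_p(G)\neq\emptyset$ and the notion of a $p$-constant character of $G$ is meaningful and Theorem \ref{th:wr} is available. Conversely, every \emph{odd} prime dividing $|G| = 2^n\, n!$ already divides $n!$ and so is $\leq n$; thus, once we know $p > 2$, the bound $p \leq n$ is automatic, which is why in the ``if'' direction the hypothesis $p \leq n$ need not be assumed separately. I would also record that, since $p$ is odd, $|G|_p = |\Sym{n}|_p$ while $\chi(1) = \bar\chi(1)$, so $\chi$ has non-zero $p$-defect if and only if $\bar\chi$ does; in particular the $\bar\chi$ occurring below automatically lies in $\varUpsilon_p(\Sym{n})$.

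With these remarks in hand the argument is immediate. For the ``only if'' direction, assuming $\chi$ is a $p$-constant character of $G$ of non-zero $p$-defect, the forward implication of Theorem \ref{th:wr} gives at once that $p$ does not divide $|\Cyc{2}|$ --- so $p > 2$ --- that $\Cyc{2}^n \leq \ker(\chi)$, and that the induced character $\bar\chi$ of $\Sym{n}$ is $p$-constant. For the converse, given that $\chi$ has non-zero $p$-defect (a hypothesis of the corollary), $p > 2$, $\Cyc{2}^n \leq \ker(\chi)$ and $\bar\chi$ $p$-constant, these are precisely the three right-hand conditions of Theorem \ref{th:wr} for $H = \Cyc{2}$, so that theorem returns that $\chi$ is $p$-constant. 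As the whole proof is a bookkeeping reduction to the already established Theorem \ref{th:wr}, there is no substantive obstacle; the single point deserving a line of care is the translation ``$p \nmid |\Cyc{2}|$ if and only if $p > 2$'' and its consequence $p \leq n$, which is what lets the result be stated entirely in terms of reflection-group data.
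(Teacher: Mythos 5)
Your proposal is correct and matches the paper's intent exactly: the paper states this as an immediate corollary of Theorem \ref{th:wr}, obtained by specializing $H=\Cyc{2}$ (so that $p\nmid|H|$ becomes $p>2$), and your extra bookkeeping about $p\leq n$ and the preservation of non-zero $p$-defect is sound.
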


The groups ${\rm D}_{n}$ are subgroups of index $2$ in ${\rm B}_{n}$.
Let $\alpha,\beta$ be partitions of some $0\leq t\leq n$ such that $|\alpha|+|\beta|=n$ and 
let $\chi^{(\alpha,\beta)}$ be the corresponding irreducible character of $H=\Cyc{2}\wr \Sym{n}$.
If $\alpha\neq \beta$, the characters $\chi^{(\alpha,\beta)}$ and $\chi^{(\beta,\alpha)}$ restrict to the same irreducible character of $G={\rm D}_{n}$. The character $\chi^{(\alpha,\alpha)}$ splits on $G$ as the sum of two 
irreducible characters $\chi^{(\alpha,\alpha)}_+, \chi^{(\alpha,\alpha)}_-$ of $G$.
Furthermore, a class $C_{(\alpha,\beta)}$ of $H$ belongs to $G$ only if $\beta$ has even 
length and
a class $C_{(\alpha,\beta)}$ of $H$ splits into two classes of $G$ if and only if 
$\beta=\emptyset$ and the parts of $\alpha$ are all even (see \cite{Pf}).

\begin{proposition}\label{pr:Dn}
Let $G$ be a group of type ${\rm D}_{n}$ with $n\geq 4$. 
Let $\chi$ be an irreducible character of $G$ of non-zero 
$p$-defect. 
Then, $\chi$ is $p$-constant for some prime $p\leq n$ if and only if $\chi$ is the restriction to $G$ of an 
irreducible $p$-constant character of ${\rm B}_{n}$.
\end{proposition}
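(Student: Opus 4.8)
The plan is to determine which irreducible character of ${\rm B}_n=\Cyc{2}\wr\Sym{n}$ can restrict to a given $\chi\in\varUpsilon_p(G)$, where $G={\rm D}_n$ is the index-$2$ subgroup of ${\rm B}_n$, by running a Murnaghan--Nakayama argument parallel to the proof of Theorem~\ref{th:wr} but evaluated only at conjugacy classes $C_{(\alpha,\beta)}$ of ${\rm B}_n$ that actually meet $G$, equivalently those with $\ell(\beta)$ even. The ``if'' direction is immediate from $\varSigma_p(G)=\varSigma_p({\rm B}_n)\cap G$: if $\chi=\theta|_G$ with $\theta\in\Irr{{\rm B}_n}$ $p$-constant ($p\le n$), then $\theta$ has non-zero $p$-defect (its degree is $\chi(1)$, of $p$-part $<|G|_p\le|{\rm B}_n|_p$), so $c_\theta\ne0$; hence $\chi$ is constant equal to $c_\theta\ne0$ on $\varSigma_p(G)$, and, having the same degree as $\theta$, is of non-zero $p$-defect, so $\chi\in\varUpsilon_p(G)$.

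For the converse, fix $\chi\in\varUpsilon_p(G)$ and write $n=pk+t$ with $0\le t<p$ (so $k\ge1$). By the description of $\Irr{G}$ recalled before the statement, $\chi$ is either (a)\ $\chi^{(\lambda,\mu)}|_G$ with $\lambda\ne\mu$, or (b)\ a constituent $\chi^{(\lambda,\lambda)}_{+}$ or $\chi^{(\lambda,\lambda)}_{-}$ with $n$ even and $\lambda\vdash n/2$. I would use, for $H=\Cyc{2}$ (so $r=2$), precisely the classes from the proof of Theorem~\ref{th:wr} that lie in $G$: namely $\vet{\alpha_1}=((t,pk),\emptyset)$ and $\vet{\beta_2}=(\emptyset,(t,pk))$ when $t>0$, and $\vet{\alpha}=((n),\emptyset)$ when $t=0$; all are $p$-singular and, having a second component of even length, are contained in $G$. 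The one genuine obstacle is that the class $((t),(pk))$ used there for ${\rm B}_n$ has a second component of odd length and so does \emph{not} meet $G$; the substitute $\vet{\beta_2}$ is what makes the ${\rm D}_n$ argument work. Case~(b) goes first: $\chi^{(\lambda,\lambda)}_{+}$ and $\chi^{(\lambda,\lambda)}_{-}$ are ${\rm B}_n$-conjugate and $\varSigma_p(G)$ is ${\rm B}_n$-invariant, so both are $p$-constant with the same constant $c_\chi\ne0$, whence the ${\rm B}_n$-character $\chi^{(\lambda,\lambda)}=\chi^{(\lambda,\lambda)}_{+}+\chi^{(\lambda,\lambda)}_{-}$ is identically $2c_\chi\ne0$ on $\varSigma_p(G)$. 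But $\lambda\star\lambda$ has two connected components of size $n/2<n$ and $pk\ne t$, so there is no $\star$-rim hook tableau of shape $\lambda\star\lambda$ and type $\vet{\alpha_1}$ (resp.\ $\vet{\alpha}$): Murnaghan--Nakayama forces $\chi^{(\lambda,\lambda)}$ to vanish on that $p$-singular class, a contradiction. Hence case~(b) does not occur.

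In case~(a), $\chi$ is literally the restriction of the ${\rm B}_n$-character $\theta:=\chi^{(\lambda,\mu)}$, so $\theta$ is constant equal to $c_\chi\ne0$ on $\varSigma_p(G)$, in particular on $\vet{\alpha_1}$ and $\vet{\beta_2}$. I claim one of $\lambda,\mu$ must be empty. Suppose both are non-empty. If $t=0$, then $\lambda\star\mu$ is disconnected, so $\theta$ vanishes on $\vet{\alpha}$, a contradiction. If $t>0$, non-vanishing of $\theta$ on $\vet{\alpha_1}$ forces the $pk$- and $t$-rim hooks into distinct components, hence $\{|\lambda|,|\mu|\}=\{pk,t\}$ with $\lambda$ and $\mu$ both hooks; a direct Murnaghan--Nakayama computation then gives $\theta_{\vet{\beta_2}}=-\theta_{\vet{\alpha_1}}$ (the two values differ only through the rim hook placed in the $\mu$-component, which contributes $\psi_1^2=1$ to $\theta_{\vet{\alpha_1}}$ but $\psi_2^2=-1$ to $\theta_{\vet{\beta_2}}$), so $c_\chi=-c_\chi$, again a contradiction. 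Therefore one of $\lambda,\mu$ is empty; and since $\chi^{(\emptyset,\mu)}$ and $\chi^{(\mu,\emptyset)}$ restrict to the same character of $G$ (again by the recalled description), we may assume $\chi=\chi^{(\nu,\emptyset)}|_G$ with $\emptyset\ne\nu\vdash n$.

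It remains to see that $\chi^{(\nu,\emptyset)}$ is $p$-constant. If $\nu=(n)$ then $\chi=1_G=1_{{\rm B}_n}|_G$ and we are done. Otherwise $\chi\ne1_G$; since $\chi^{(\nu,\emptyset)}$ is inflated from the character $\chi_\nu$ of $\Sym{n}$ attached to $\nu$, restricting $\chi=\chi^{(\nu,\emptyset)}|_G$ to the subgroup $\Sym{n}\le G$ of permutation matrices shows $\chi_\nu$ is $p$-constant on $\Sym{n}$ with the same nonzero constant $c_\chi$. If $p=2$, evaluating $\chi$ at a $2$-singular element of $G$ with trivial permutation part and at one whose permutation part is a transposition $s$ gives $\chi_\nu(1)=\chi_\nu(s)$, so $s\in\ker\chi_\nu$ and (transpositions generating $\Sym{n}$) $\chi_\nu=1_{\Sym{n}}$, i.e.\ $\nu=(n)$, against $\nu\ne(n)$; so $p$ is odd. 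Then $|\Sym{n}|_p=|G|_p$ (as $|G|=2^{n-1}n!$), so $\chi_\nu$ has non-zero $p$-defect, i.e.\ $\chi_\nu\in\varUpsilon_p(\Sym{n})$, and Corollary~\ref{cor:Bn} applied to $\chi^{(\nu,\emptyset)}$ (which has non-zero $p$-defect since $|{\rm B}_n|_p=|\Sym{n}|_p$, satisfies $\Cyc{2}^n\le\ker\chi^{(\nu,\emptyset)}$, and has $\overline{\chi^{(\nu,\emptyset)}}=\chi_\nu$) shows $\chi^{(\nu,\emptyset)}$ is an irreducible $p$-constant character of ${\rm B}_n$ whose restriction to $G$ is $\chi$. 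The hard part is exactly the point flagged above: a class of shape $((t),(pk),\emptyset,\dots)$ drives the proof of Theorem~\ref{th:wr}, but for $H=\Cyc{2}$ its second component has odd length, placing it outside ${\rm D}_n$; replacing it by the all-$\mathcal{C}_2$-coloured class $(\emptyset,(t,pk))\in{\rm D}_n$, on which the character value only picks up the sign $\psi_2^2=-1$, is precisely what carries the argument through and, at the same time, rules out the split characters $\chi^{(\lambda,\lambda)}_{\pm}$.
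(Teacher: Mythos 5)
Your proof is correct, and while it follows the same basic strategy as the paper (apply the Murnaghan--Nakayama rule to ${\rm B}_n$-classes $C_{(\alpha,\beta)}$ with $\ell(\beta)$ even, so that they lie in ${\rm D}_n$, with $(\emptyset,(t,pk))$ substituting for the unavailable $((t),(pk))$), it is organized quite differently and is in several respects cleaner. The paper splits into cases according to the parities of $pk$ and $t$: when both are even it avoids $((t,pk),\emptyset)$ and $((n),\emptyset)$ (presumably because these classes split in ${\rm D}_n$) and instead runs separate, more elaborate computations with $(\emptyset,(p,pk-p))$, $(\emptyset,(n/2,n/2))$, $((1,p-1,p),\emptyset)$ and $((pk),(1,t-1))$. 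You observe, correctly, that the splitting of a ${\rm B}_n$-class into two ${\rm D}_n$-classes is harmless here, since one only ever evaluates genuine ${\rm B}_n$-characters (either $\chi^{(\lambda,\mu)}$ with $\lambda\neq\mu$, or the sum $\chi^{(\lambda,\lambda)}_{+}+\chi^{(\lambda,\lambda)}_{-}$) and a $p$-constant character takes the same value on all $p$-singular elements regardless of the ${\rm D}_n$-class they land in; this lets you use $((n),\emptyset)$ and the pair $((t,pk),\emptyset)$, $(\emptyset,(t,pk))$ uniformly and dispense with the parity case analysis. You are also more explicit than the paper on two points it treats tersely: the split characters $\chi^{(\lambda,\lambda)}_{\pm}$ (your observation that $\chi_{-}=\chi_{+}^{g}$ is again $p$-constant with the same constant, so the sum is $2c_\chi\neq 0$ yet vanishes by disconnectedness of $\lambda\star\lambda$, is a clean way to kill this case), and the final reduction, where restricting to the subgroup $\Sym{n}\leq{\rm D}_n$ of permutation matrices immediately shows $\chi_\nu$ is $p$-constant, whereas the paper appeals to the particular classes used in the proofs in \cite{PZ} and \cite{end}. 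Both routes are valid; yours trades the paper's careful class-by-class bookkeeping for a couple of structural observations and is, I think, easier to verify.
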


\begin{proof}
Suppose that $\chi\in \varUpsilon_p(G)$. Our 
first goal is to show that $\chi$ is the restriction to $G$ of a character 
$\chi^{(\alpha,\emptyset)}$ of $H={\rm B}_{n}$.

So, let $\chi$ be the restriction to $G$ of some character $\chi^{(\alpha,\beta)}$ (or one of the 
two characters $\chi^{(\alpha,\alpha)}_{\pm}$).
Let $n=pk+t$ with $0\leq t< p$. If $pk$ and $t$ are not both even, we can prove that 
$\beta=\emptyset$, arguing as as done proving Theorem \ref{th:wr}, since the 
classes $C_{((t,pk),\emptyset)}$ and $C_{(\emptyset,(t,pk))}$ belong to $G$.

Suppose now that $\beta\neq \emptyset$ and that both $pk$ and $t$ are even. 
If $t\geq 2$, we consider the classes $C_{\vet{\eta_1}},C_{\vet{\eta_2}}$ of $G$, where 
$\vet{\eta_1}=(\emptyset,(t,pk))$ and $\vet{\eta_2}=((pk),(1,t-1))$.
From $\chi^{(\alpha,\beta)}_{\vet{\eta_1}}\neq 0$ we get that $\alpha,\beta$ are both 
hooks of length, respectively,  $pk$ and $t$ and from $\chi^{(\alpha,\beta)}_{\vet{\eta_2}}\neq 0$ 
we get $\beta=(t)$ or $(1^t)$. In this case, we have
$\chi^{(\alpha,\beta)}_{\vet{\eta_1}}=-\chi^{(\alpha,\beta)}_{\vet{\eta_2}}$, a contradiction.

Now, let $t=0$ and $n=pk\geq 4$ be even (whence $k\geq 2$).
Consider the classes $C_{\vet{\delta_1}},C_{\vet{\delta_2}}$, where 
$\vet{\delta_1}=(\emptyset,(p,pk-p))$ and $\vet{\delta_2}=(\emptyset, (n/2,n/2))$. Observe that 
$\vet{\delta_1}=\vet{\delta_2}$ if, and only if, $n=2p$.
From $\chi^{(\alpha,\beta)}_{\vet{\delta_1}}\neq 0$ we 
get that $\alpha,\beta$ are both hooks of length, respectively,  $pk-p$ and $p$. However in this 
case, $\chi^{(\alpha,\beta)}_{\vet{\delta_2}}= 0$, unless $n=2p$. 
So, suppose $n=2p$ and take the class $C_{\vet{\delta_3}}$, where 
$\vet{\delta_3}=((1,p-1,p),\emptyset)$:
$\chi^{(\alpha,\beta)}_{\vet{\delta_3}}= 0$, unless $\beta=(p)$ or $(1^p)$. In this case, 
comparison between $\chi^{(\alpha,\beta)}_{\vet{\delta_2}}$ 
and $\chi^{(\alpha,\beta)}_{\vet{\delta_3}}$ always produces an absurd.

We conclude that  $\chi$ is the irreducible restriction to $G$ of $\chi^{(\alpha,\emptyset)}$ 
for some $\alpha \vdash n$. 
As we already remarked, the characters $\chi^{(\alpha,\emptyset)}$  are obtained by inflation from $\Sym{n}$. 
Hence, we are left to exclude that the character $\chi^{(\alpha,\emptyset)}|_G$ is $p$-constant, 
even if the corresponding character $\chi_\alpha$ of $\Sym{n}$ is not $p$-constant.
In particular we have to consider  suitable conjugacy classes $C_{\vet{(\alpha,\emptyset)}}$ of $p$-singular elements of $G$, such that $\alpha\vdash n$ does not consist of even parts. 
However, this was already done in the original proof of Proposition \ref{pr:An} in \cite{PZ} and 
\cite{end}, where the conjugacy classes that were analyzed never consist of even parts.
\end{proof}

Finally, we consider the  dihedral groups ${\rm I}_2(n)$  of order $2n$. 
A direct check of their character table, that can be found, for instance, in \cite{JL}, gives the following result.

\begin{proposition}\label{die}
Let $G$ be a group of type ${\rm I}_2(n)$ with $n\geq 3$ and let $\chi$ be a non-linear irreducible character of $G$ of non-zero $p$-defect. 
Then, $\chi$ is $p$-constant for some prime $p$ dividing $2n$ if and only if $p=3=|G|_3$
and $\chi=\psi_{n/3}$, in the notation of \cite[page 182]{JL}. In particular, $c_\chi=-1$.
\end{proposition}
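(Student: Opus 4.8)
The plan is to work directly with the character table of the dihedral group $G = {\rm I}_2(n)$ of order $2n$. Recall that $G = \langle \rho, \sigma \rangle$ where $\rho$ has order $n$, $\sigma$ is an involution, and $\sigma \rho \sigma = \rho^{-1}$; the conjugacy classes outside $\langle \rho \rangle$ consist of reflections (one class if $n$ is odd, two if $n$ is even), and the non-linear irreducible characters are the $\psi_h$ for $1 \le h < n/2$, with $\psi_h(\rho^k) = 2\cos(2\pi hk/n) = \zeta^{hk} + \zeta^{-hk}$ (where $\zeta = e^{2\pi i/n}$) and $\psi_h(\sigma) = 0$ on every reflection. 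First I would observe that since $\psi_h$ vanishes on all reflections, the $p$-singular elements that matter are exactly the $p$-singular powers of $\rho$, provided $p \mid n$; and if $p \nmid n$ but $p = 2 \nmid n$ is impossible here, while if $p = 2$ and $n$ is even, the reflections are $2$-singular and $\psi_h$ takes the value $0$ there, so any $2$-constant $\psi_h$ would need $c_\chi = 0$, contradicting the hypothesis of non-zero $p$-defect. So one is reduced to the case $p \mid n$ and the constraint is that $\zeta^{hk} + \zeta^{-hk}$ is the same non-zero integer $c$ for every $k$ with $p \mid k$... wait, the $p$-singular powers $\rho^k$ are those for which $p \mid \mathrm{ord}(\rho^k)$, i.e. those $k$ with $n/\gcd(n,k)$ divisible by $p$, equivalently $v_p(\gcd(n,k)) < v_p(n)$. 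So the condition is: $\psi_h(\rho^k) = c$ for all such $k$.

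Next I would extract the arithmetic content. Write $n = p^a m$ with $p \nmid m$. The non-$p$-singular powers of $\rho$ are exactly $\rho^k$ with $p^a \mid k$, i.e. the subgroup $\langle \rho^{p^a} \rangle$ of order $m$; the $p$-singular elements of $\langle \rho \rangle$ are the remaining $n - m$ elements $\rho^k$. The requirement is that the sum $\zeta^{hk} + \zeta^{-hk}$ is constant (equal to $c \ne 0$, hence $c \in \{\pm 1, \pm 2\}$ since $|\zeta^{hk}+\zeta^{-hk}| \le 2$ and it must be a rational integer) as $k$ ranges over this set of size $n - m$. The value $c = 2$ forces $\zeta^{hk} = 1$ for all such $k$; the value $c = -2$ forces $\zeta^{hk} = -1$ for all such $k$; and $c = \pm 1$ forces $hk \equiv \pm n/3 \pmod n$ (up to sign) for all such $k$, which in particular requires $3 \mid n$. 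The main obstacle — the heart of the argument — is to show that the rigidity of "one fixed value on a large structured set of exponents" pins $n$ and $h$ down completely. I would handle it by picking two well-chosen $p$-singular exponents $k_1, k_2$ and comparing: e.g. among the $p$-singular powers one can find $k$ and $2k$ (or $k$ and another multiple) that remain $p$-singular, and then $\zeta^{hk} + \zeta^{-hk} = \zeta^{2hk} + \zeta^{-2hk}$ forces, via the substitution $w = \zeta^{hk}$, the equation $w + w^{-1} = w^2 + w^{-2}$, i.e. $w^2 + w^{-2} - w - w^{-1} = 0$, whose solutions on the unit circle are $w \in \{1, \text{primitive 6th roots and their conjugates...}\}$; concretely $w + \bar w = w^2 + \bar w^2$ gives $2\cos\theta = 2\cos 2\theta$, so $\cos 2\theta = \cos\theta$, yielding $\theta \in \{0, \pm 2\pi/3\}$. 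Thus every $p$-singular $\zeta^{hk}$ is a cube root of unity, and since not all can be $1$ (else $c=2$ and one checks $\psi_h$ would be linear or trivial — excluded), the only surviving possibility is that $\zeta^{hk}$ ranges among the primitive cube roots, forcing $3 \mid n$, $p = 3$, $a = v_3(n) = 1$, and $h = n/3$ (so that $\psi_h(\rho^k) = 2\cos(2\pi k/3) = -1$ exactly when $3 \nmid k$, i.e. on all $3$-singular powers), giving $c_\chi = -1$.

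Finally I would assemble the pieces: having shown $p = 3$, $|G|_3 = 3$, and $\chi = \psi_{n/3}$ is the unique candidate, I would verify the converse — that $\psi_{n/3}$ really is $3$-constant — which is immediate: its value on $\rho^k$ is $2\cos(2\pi k/3)$, equal to $-1$ whenever $3 \nmid k$ (precisely the $3$-singular powers, since $|G|_3 = 3$ means the $3$-part of $\mathrm{ord}(\rho^k)$ is either $3$ or $1$), and $0$ on all reflections but there are no $3$-singular reflections when $3 \nmid 2$, i.e. always, since reflections have order $2$. Hence $\psi_{n/3}(g) = -1$ for every $3$-singular $g$, and it has non-zero $3$-defect because $\psi_{n/3}(1) = 2 \ne 3 = |G|_3$. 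I also need to double-check the small-$n$ edge cases ($n = 3, 4$) and the requirement $h < n/2$ so that $\psi_{n/3}$ is genuinely one of the listed non-linear characters (this needs $n/3 < n/2$, always true, and $n/3 \ge 1$, i.e. $n \ge 3$), completing the proof. The one spot demanding care is ruling out $c = \pm 2$ and confirming that the "comparison of exponents" step always has a valid pair $(k, 2k)$ of $3$-singular powers available — which holds as soon as $n > 3$, and for $n = 3$ itself the single $3$-singular class $\{\rho, \rho^2\}$ gives $\psi_1(\rho) = -1$ directly, consistent with the conclusion.
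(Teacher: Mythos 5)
Your argument is correct and is exactly what the paper means by its one-line proof (``a direct check of the character table''): you verify from the values $\psi_h(\rho^k)=\zeta^{hk}+\zeta^{-hk}$, $\psi_h(\text{reflection})=0$ that $p=2$ is excluded by the defect hypothesis, and the comparison of the $p$-singular exponents $k=1,2$ (and then $k=3$) pins down $\zeta^h$ as a primitive cube root of unity, hence $h=n/3$, $p=3$, $v_3(n)=1$ and $c_\chi=-1$. Two cosmetic slips that do not affect the argument: the aside that $c=\pm1$ forces $hk\equiv\pm n/3$ should allow $hk\equiv\pm n/6$ when $c=+1$ (your $\cos 2\theta=\cos\theta$ step disposes of this case anyway), and the case $p=2$, $n$ odd is not ``impossible'' but rather excluded because $\psi_h$ then has $2$-defect zero.
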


\begin{proof}[Proof of Theorem \ref{th1}]
Let $G$ be a finite reflection group. Suppose that $1_G\neq \chi \in \varUpsilon_p(G)$, for some prime $p$ dividing $|G|$. Since  $G$ can  be written as direct product of irreducible reflection groups $G=H_1\times\ldots\times H_r$, by Proposition \ref{direct}, $p$ divides $|H_i|$ for a unique value of $i\in \{1,\ldots,r\}$. Furthermore, $\chi=\psi \otimes 1_{K}$, where $\psi \in \varUpsilon_p(H_i)$, $K=\prod_{j\neq i} H_j$ and $c_\chi=c_\psi$.

Now, by Propositions \ref{ecc} and \ref{pr:An},  $c_{\psi}=\pm 1$ if $H_i$ is of exceptional type or $H_i=\Sym{n}$ for $n\geq 3$. If $H_i$ is of type ${\rm B}_n$, then $\psi$ is obtained by inflation from a $p$-constant character of $\Sym{n}$ (Corollary \ref{cor:Bn}) and so, also in this case, $c_\psi=\pm 1$. Clearly, $c_\psi=\pm 1$ even when $H_i$ is of type ${\rm D}_n$ (Proposition \ref{pr:Dn}).
Finally, if $H_i$ is a dihedral group, by Proposition \ref{die} we have $c_\psi=-1$.
\end{proof}

\begin{proof}[Proof of Theorem \ref{th2}]
As done proving Theorem \ref{th1}, write $G$ as direct product of irreducible reflection groups $G=H_1\times\ldots\times H_r$. Since $p=2$ divides the order of any $H_i$, by Proposition \ref{direct}, $r=1$. Now, by Propositions \ref{direct}, \ref{pr} and \ref{pr:Dn} and Corollary \ref{cor:Bn}, $G$ must be a dihedral group $I_2(n)$ where $n\geq 1$ ($I_2(1)\cong \Cyc{2}$).
Now, by Proposition \ref{die} we obtain that $\chi$ must be a linear character of $G$.
Finally, by Proposition \ref{linear}, $G$ has order $2n$, where $n$ is odd. The statement follows from the fact that, 
when $n$ is odd,  $I_2(n)$ has only two linear characters.
\end{proof}

\section{Open problems}\label{Sec4}

Our final aim is to characterize the finite groups for which $\varUpsilon_p(G)\neq \{1_G\}$. From Propositions \ref{linear} and \ref{direct} and Lemma \ref{lm:center}, we are 
reduced to consider non-linear characters of  indecomposable groups with trivial center.

The condition  $c_\chi=\pm 1$ seems to be too weak for a possible classification. Namely, by Theorem \ref{th1}, the $p$-constant characters (of non-zero $p$-defect) of all finite reflection groups have this property.
On the other hand, one may focus on 
$p$-constant characters $\chi$ such that $|c_\chi|>1$. Using \cite[\texttt{SmallGroups}]{GAP}, the following conjecture has been verified for all groups of size less than $1250$:

\begin{conj}\label{conj1}
Let $G$ be a finite group which admits an irreducible $p$-constant character $\chi$ such that 
$|c_\chi|>1$. Then $p\neq 2$ and the Sylow $p$-subgroups of $G$ are homocyclic.
\end{conj}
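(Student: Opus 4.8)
\medskip
\noindent\textbf{Sketch of a possible approach to Conjecture~\ref{conj1}.}
Let $\chi\in\varUpsilon_p(G)$ with $|c_\chi|>1$ and fix $P\in\mathrm{Syl}_p(G)$. The plan is to start from the usual reductions. Since a linear $p$-constant character has $|c_\chi|=1$ by Proposition~\ref{linear}, $\chi$ is non-linear, hence $G$ is not nilpotent by the last corollary of Section~\ref{Sec2}. Lemma~\ref{lm:center} then applies — its alternative~(ii) forces $\chi$ linear and so is excluded — giving $p\nmid|Z(G)|$, $Z(G)\le\ker\chi$, and an inflation $\bar\chi\in\varUpsilon_p(G/Z(G))$ with $c_{\bar\chi}=c_\chi$; since $p\nmid|Z(G)|$ we have $P\cong PZ(G)/Z(G)\in\mathrm{Syl}_p(G/Z(G))$, so one may iterate and assume $Z(G)=1$.

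Next I would extract the main local constraint. As every non-identity element of $P$ is $p$-singular,
\begin{equation*}
\chi|_P \;=\; c_\chi\,1_P \;+\; \tfrac{\chi(1)-c_\chi}{|P|}\,\rho_P ,
\end{equation*}
with $\rho_P$ the regular character of $P$; comparing the multiplicities of $1_P$ and of a non-trivial irreducible on the two sides forces $k:=\tfrac{\chi(1)-c_\chi}{|P|}$ to be a non-negative integer, so in particular $|G|_p\mid\chi(1)-c_\chi$, and the same identity holds for every $p$-subgroup in place of $P$. If $k=0$ then $c_\chi=\chi(1)$, equivalently $\varSigma_p(G)\subseteq\ker\chi$; here $M:=\ker\chi$ is normal of index prime to $p$, $\chi$ is inflated from a faithful non-linear irreducible character of the $p'$-group $G/M$, and the $p'$-parts of all $p$-singular elements lie in $M$ — precisely the situation of the Frobenius examples of Section~\ref{Sec4}. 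In this branch one would study $P$ (contained in $M$) together with the coprime group of operators it acquires from $G/M$ and show that if $P$ were abelian but not homocyclic, or if $p=2$, this operator group could only admit abelian faithful quotients, contradicting $|c_\chi|=\chi(1)>1$; the case $P$ non-abelian would be ruled out separately, using that $\chi|_P$ then contains every non-trivial irreducible of $P$ together with Clifford theory relative to $M$. If $k\ge1$, applying the identity above to $N_G(P)$ and to the $p$-singular classes $xy$ with $x\in P$ and $y\in C_G(x)$ of $p'$-order yields further divisibility conditions on $c_\chi$.

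The hard part is the case $k\ge1$. Here the plan is an induction on $|G|$: push $\chi$ down a chief series by Clifford theory and use minimality of a counterexample to reduce to $G$ quasisimple, or an extension of a simple group $S$ by outer automorphisms with $|c_\chi|>1$ surviving the extension. By Theorem~\ref{simple}, $S$ then lies in case~(iii) — a group of Lie type in non-defining characteristic $p$ with non-cyclic Sylow $p$-subgroups — and one must check, for all such groups and their relevant extensions, that the Sylow $p$-subgroup is in fact homocyclic (it should be a section of a $\Phi_d$-torus) and that $p\ne2$. The principal obstacle is the control of this almost-simple layer: one needs that $|c_\chi|>1$ is neither manufactured by the induction nor by the direct-product and wreath-product constructions of Propositions~\ref{direct} and~\ref{th:wr}, and a complete analysis of case~(iii) of Theorem~\ref{simple} — only begun in Section~\ref{Sec4} for $\PSL_3(q)$ and $\PSU_3(q)$ — would be needed to finish.
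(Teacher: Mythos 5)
The statement you are addressing is Conjecture \ref{conj1}: the paper does not prove it, and offers only computational evidence (a GAP check of all groups of order less than $1250$). There is therefore no proof in the paper to compare yours against; the only question is whether your sketch could close the problem, and in its present form it cannot.

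Your preliminary reductions are sound: linearity is excluded by Proposition \ref{linear}, case (ii) of Lemma \ref{lm:center} forces $\chi$ linear and is therefore excluded, iterating case (i) lets you assume $Z(G)=1$, and the identity $\chi|_P=c_\chi 1_P+\tfrac{\chi(1)-c_\chi}{|P|}\,\rho_P$ together with the divisibility $|G|_p\mid\chi(1)-c_\chi$ is correct and potentially useful. But everything after that is a plan rather than an argument, and two of its pivots are unsupported. First, in the branch $\varSigma_p(G)\subseteq\ker\chi$ you assert that if $P$ is not homocyclic, or if $p=2$, then the group of operators induced on $P$ by $G/\ker\chi$ ``could only admit abelian faithful quotients''; no reason is given, and it is unclear what structural fact about automorphism groups of non-homocyclic abelian $p$-groups (let alone non-abelian $P$) you intend to invoke. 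Second, the reduction of the case $k\ge 1$ to quasisimple groups ``by Clifford theory'' does not go through as stated: for $N\trianglelefteq G$ the irreducible constituents of $\chi|_N$ need not be $p$-constant on $N$ (constancy of $\chi$ on $\varSigma_p(G)$ says nothing about the individual Clifford constituents on $\varSigma_p(N)$), the value $|c|$ attached to a constituent need not exceed $1$, and $p$ need not divide $|N|$; so a minimal-counterexample induction along a chief series does not obviously preserve the hypotheses. Finally, even granting such a reduction, case (iii) of Theorem \ref{simple} is an unclassified infinite family --- the paper settles only $\PSL_3(q)$ and $\PSU_3(q)$ in Proposition \ref{pr:L3} --- so your outline terminates in essentially the open problem it set out to solve. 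In short, the correct portion of your sketch consists of the easy reductions already implicit in Section \ref{Sec2}, and the statement remains a conjecture.
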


Notice that when $c_\chi=\pm 1$ we can observe a different behavior. For instance, the 
irreducible character $\chi$ of $\Alt{6}$ of degree  $9$ is $2$-constant with $c_\chi=+1$
and the irreducible character of $\PSL_3(2)$ of degree $7$ is $2$-constant with 
$c_\chi=-1$. However, in both cases the Sylow $2$-subgroups are dihedral of order $8$.

The smallest group  that admits an irreducible $p$-constant character with $|c_\chi|>1$ is obtained taking $p=3$ in the following:

\begin{lemma}\label{Q8}
For any odd prime $p$, there exists a finite group $G$ admitting an irreducible $p$-constant character $\chi$ with $c_\chi=+2$.
\end{lemma}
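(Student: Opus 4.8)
The plan is to exhibit, for each odd prime $p$, a Frobenius group $G=\Cyc{p}^2\rtimes Q_8$ with elementary abelian kernel $K=\Cyc{p}^2$ of order $p^2$ and quaternion complement $Q_8$ of order $8$, and then to take for $\chi$ the inflation to $G$ of the unique $2$-dimensional irreducible character $\theta$ of $Q_8$. The point is that $\varSigma_p(G)$ will be forced to lie inside $K\leq\ker(\chi)$, so that $\chi$ is constant on it with value $\chi(1)=2$.

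First I would construct a fixed-point-free action of $Q_8$ on $K\cong\F_p^2$. Since $p$ is odd, a counting argument (the $(p+1)/2$ squares of $\F_p$ and the $(p+1)/2$ elements of the form $-1-b^2$ cannot be disjoint in $\F_p$) produces $a,b\in\F_p$ with $a^2+b^2=-1$. Then
$$N=\begin{pmatrix}0&1\\-1&0\end{pmatrix},\qquad M=\begin{pmatrix}a&b\\b&-a\end{pmatrix}$$
lie in $\SL_2(\F_p)$ and satisfy $N^2=M^2=-I$ and $MNM^{-1}=N^{-1}$, so $\langle M,N\rangle\cong Q_8$; moreover $\mathrm{tr}(M)=\mathrm{tr}(N)=\mathrm{tr}(MN)=0$, so every element of order $4$ in $\langle M,N\rangle$ has characteristic polynomial $\lambda^2+1$ and hence no eigenvalue $1$ (as $p\neq2$), while the central involution acts with eigenvalue $-1\neq1$. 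Thus the action is fixed-point-free and $G$ is a genuine Frobenius group with complement $Q_8$.

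Next I would locate the $p$-singular elements. Every element of $G$ outside $K$ has the form $(v,h)$ with $1\neq h\in Q_8$, and $(v,h)^{|h|}=1$ because $1+h+\dots+h^{|h|-1}$ annihilates $\F_p^2$ (equivalently, each such element is conjugate into the complement); hence its order divides $|h|\in\{2,4\}$ and it is $p$-regular. Therefore $\varSigma_p(G)=K\setminus\{1\}\subseteq K$. Since $K\leq\ker(\chi)$ by construction, $\chi$ takes the constant value $\chi(1)=2$ on $\varSigma_p(G)$, and $\chi$ is irreducible because inflation preserves irreducibility; as $|G|_p=p^2>2=\chi(1)$, the character $\chi$ is not of $p$-defect zero. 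Hence $\chi\in\varUpsilon_p(G)$ with $c_\chi=2$, and for $p=3$ the group $G$ has order $72$, the value mentioned in the preceding discussion.

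The only step that is not purely formal is producing the fixed-point-free $Q_8$-action on $\F_p^2$ for every odd $p$; this reduces to the elementary arithmetic of $\F_p$ indicated above, and is uniform in $p$ (no case split on $p\bmod 4$ is needed). Everything else — irreducibility of the inflated character, the fact that all $p$-singular elements lie in the kernel, and the non-vanishing of the $p$-defect — is immediate.
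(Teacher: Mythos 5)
Your construction is exactly the one in the paper: the Frobenius group $\F_p^2\rtimes Q_8$ built from the matrices $\begin{pmatrix}a&b\\b&-a\end{pmatrix}$ and $\begin{pmatrix}0&\pm1\\\mp1&0\end{pmatrix}$ with $a^2+b^2+1=0$, with $\chi$ the inflation of the degree-$2$ character of $Q_8$. Your write-up just supplies more detail than the paper does (the pigeonhole argument for $a,b$, the trace computation showing the action is fixed-point free, and the location of $\varSigma_p(G)$ inside the kernel), and it is correct.
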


\begin{proof}
For any prime $p$, take a $2$-dimensional space $V$ defined over the field $\F_p$.
The group $\Q_8=\{\pm 1, \pm i, \pm j , \pm k\}$ admits an irreducible representation $\Phi$ with space of representation $V$, given by
$$\Phi(i)=\begin{pmatrix} a & b \\ b & -a   \end{pmatrix}, \quad 
\Phi(j)=\begin{pmatrix} 0 & -1 \\ 1 & 0   \end{pmatrix},  
$$
where $a,b\in \F_p$ are chosen such that $a^2+b^2+1=0$.
Define
$$G(p)=\left \{\begin{pmatrix} 1 & 0 \\ v & \Phi(g) \end{pmatrix}: v \in V, g \in \Q_8\right \}\cong V\rtimes \Q_8.$$
The group $\Q_8$ acts on $V$ as a group of fixed-point free automorphisms and so $G(p)$ is a Frobenius group.
The elements of $G(p)$ have order $\{1,2,4,p\}$. The irreducible character $\chi$ of $G(p)$ of degree $2$,  obtained by inflation from $\Q_8$,  is $p$-constant with $c_\chi=+2$.
\end{proof}

The smallest group that satisfies Conjecture \ref{conj1} with a Sylow $p$-subgroup which is not elementary abelian is $G_{248,253}$ whose structure is $(\Cyc{9}\times \Cyc{9}) \rtimes\Q_8$, while
the smallest one with a Sylow $p$-subgroup of rank $>2$ is $G_{248,730}$ whose structure is $(\Cyc{3}\times 
\Cyc{3}\times\Cyc{3}\times \Cyc{3}) \rtimes\Q_8$ (here $G_{n,k}$ denotes the $k$-th group of size $n$, according to 
\cite{GAP}).
In both cases, $p=3$ and $\chi(1)=c_\chi=+2$. 

The smallest group admitting an irreducible $p$-constant character with  $c_\chi<-1$ is ${\rm M}_{10}$ ($c_\chi=-2$ and $\chi(1)=16$), while the smallest perfect group with $|c_\chi|>1$ is the following.

\begin{example}
Consider the  group $H=\langle h_1,h_2,h_3 \rangle\leq \SL_2(11)$, where
$$h_1=\begin{pmatrix} 3 & 1 \\ 1 & -3  \end{pmatrix},\quad
h_2=\begin{pmatrix} 0 & 1 \\ -1 & 0  \end{pmatrix},\quad
h_3=\begin{pmatrix} 2 & 0 \\ 0 & 6  \end{pmatrix}.
$$
The group  $H$ is isomorphic to $\SL_2(5)$ and acts regularly on $V=\F_{11}^2$, hence the group 
$G=V\rtimes H$ is a Frobenius group with a single class of $11$-singular elements.
We get that $\varUpsilon_{11}(G)=\Irr{G}$.
Also, for $\chi(1)=1,2,3,4,5,6$ we obtain $c_\chi=\chi(1)$ and for $\chi(1)=120$ we get $c_\chi=-1$.
Clearly the Sylow $11$-subgroups of $G$ are elementary abelian of order $11^2$.
\end{example}

Looking at the perfect groups, we can state a stronger version of Conjecture \ref{conj1}. 
Indeed, a direct check on the perfect groups of size $<15360$, whose character tables are contained in the package 
\texttt{AtlasRep 1.5.0}, seems to confirm the following.

\begin{conj}\label{perfect}
Let $G$ be a finite perfect group admitting an irreducible $p$-constant character $\chi$ such that 
$|c_\chi|>1$. Then $p=3$, $c_\chi=\pm 2$ and the Sylow $3$-subgroups of $G$ are isomorphic to 
$\Cyc{3}\times \Cyc{3}$.
\end{conj}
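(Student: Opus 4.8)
\emph{Strategy and first reductions.} The plan is to strip $G$ down, by standard reductions, to a central product of quasi-simple groups, and then to invoke Theorem~\ref{simple}. Since $G$ is perfect it has no direct factor of order $2$, so alternative~(ii) of Lemma~\ref{lm:center} cannot occur; hence $p\nmid|Z(G)|$, $Z(G)\le\ker(\chi)$ and the inflated character $\bar\chi\in\varUpsilon_p(G/Z(G))$ has $c_{\bar\chi}=c_\chi$. For a perfect group $Z_2(G)=Z(G)$ (for $g\in Z_2(G)$ the map $x\mapsto[g,x]$ is a homomorphism $G\to Z(G)$, trivial since $G=G'$), so $G/Z(G)$ is perfect with trivial center and with Sylow $p$-subgroups isomorphic to those of $G$; thus we may assume $Z(G)=1$. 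By Proposition~\ref{direct}, if $G=H\times K$ with $H,K$ perfect and nontrivial then, up to swapping factors, $\chi=\psi\otimes 1_K$ with $\psi\in\varUpsilon_p(H)$ and $c_\psi=c_\chi$, so we may also assume $G$ is directly indecomposable. It then remains to show the solvable radical $R=O_{\infty}(G)$ is trivial, after which $F^{*}(G)=E(G)$ is a central product of quasi-simple components.

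\emph{The quasi-simple layer.} If $G$ is quasi-simple, then $S=G/Z(G)$ is simple and $\bar\chi$ is a nonprincipal $p$-constant character of $S$ with $|c_{\bar\chi}|>1$; by Theorem~\ref{simple}, either $S={\rm M}_{22}$, $p=3$, $\chi(1)=385$, $c_\chi=-2$, or $S$ is of Lie type in characteristic $\ne p$ with non-cyclic Sylow $p$-subgroup. In the first case $3\nmid|Z(G)|$ and the Schur multiplier $\Cyc{12}$ of ${\rm M}_{22}$ force $|Z(G)|\mid4$, so $|G|_3=|{\rm M}_{22}|_3=9$; and since ${\rm M}_{22}\supseteq\Alt{7}$, whose Sylow $3$-subgroup is $\Cyc{3}\times\Cyc{3}$ (there being no element of order $9$ in $\Alt{7}$), a Sylow $3$-subgroup of ${\rm M}_{22}$, hence of $G$, equals $\Cyc{3}\times\Cyc{3}$ --- exactly the asserted conclusion. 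The Lie-type alternative is the hard part: Theorem~\ref{simple} leaves $c_\chi$ undetermined there, and proving that $|c_\chi|>1$ forces $p=3$, $c_\chi=\pm2$ and Sylow $3$-subgroups $\cong\Cyc{3}\times\Cyc{3}$ amounts to completing case~(iii) of Theorem~\ref{simple}; it subsumes the $\PSL_3(q)$, $\PSU_3(q)$ analysis of Section~\ref{Sec4} and, by \cite[Theorem~6.1]{LM}, otherwise concerns only groups with Sylow $p$-subgroups of rank $\le2$, apart from $\PSL_5(q)$, $\PSU_5(q)$ and ${\rm E}_8(q)$. When $F^{*}(G)=E(G)$ is a product of several quasi-simple components, the indecomposability of $G$ and a Clifford-theoretic wreath-product argument in the spirit of Theorem~\ref{th:wr} should force $\chi$ to be controlled by a single component, reducing again to the quasi-simple case.

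\emph{The main obstacle.} The serious step is proving $R=1$. If $R$ has a minimal normal subgroup $N$ of order prime to $p$, restriction to $NP$ with $P\in{\rm Syl}_p(G)$ is the natural tool: every element of $NP\setminus N$ is $p$-singular, so $\chi|_{NP}$ is constantly $c_\chi$ there, and comparing $\langle\chi|_{NP},1_{NP}\rangle$ with the forced decomposition $\chi|_P=c_\chi\,1_P+\frac{\chi(1)-c_\chi}{|P|}\rho_P$ (where $\rho_P$ is the regular character of $P$) yields congruences among $\langle\chi|_N,1_N\rangle$, $c_\chi$ and $|P|$ that one would iterate down to a $\ker$-quotient. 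The genuinely obstructive case is $O_p(G)\ne1$, where $\chi$ may be inflated from $G/O_p(G)$: already the Example $\F_{11}^2\rtimes\SL_2(5)$ is a perfect group with $O_{11}(G)=\F_{11}^2$ on which $\SL_2(5)$, of order prime to $11$, acts fixed-point-freely, so the $11$-singular elements form a single class inside $O_{11}(G)$ and \emph{every} character inflated from $\SL_2(5)$ is $11$-constant with $c_\chi=\chi(1)\in\{1,2,3,4,5,6\}$; thus the conclusion as stated fails for general perfect groups, and a complete proof must add a hypothesis excluding such semidirect products (for instance $O_{\infty}(G)=1$, under which the reductions above apply). In short, I expect essentially all the difficulty to lie in (i) this solvable-radical step and (ii) finishing case~(iii) of Theorem~\ref{simple} in the quasi-simple layer.
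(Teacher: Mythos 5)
The statement you are trying to prove is Conjecture~\ref{perfect}: the paper offers no proof of it, only a computational check on perfect groups of order less than $15360$, so there is nothing to compare your argument against except the statement itself. More importantly, your proposal is not a proof. It is a reduction strategy whose two decisive steps you explicitly leave open: (i) showing the solvable radical is trivial, and (ii) resolving case~(iii) of Theorem~\ref{simple} for quasi-simple groups of Lie type, where $c_\chi$ is not pinned down by anything in the paper (Proposition~\ref{pr:L3} handles only $\PSL_3(q)$ and $\PSU_3(q)$, and \cite[Theorem~6.1]{LM} still leaves rank-$\le 2$ Sylow subgroups plus $\PSL_5(q)$, $\PSU_5(q)$, ${\rm E}_8(q)$). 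The several-components case of $E(G)$ is also only asserted to ``should force'' a reduction; Theorem~\ref{th:wr} treats $H\wr\Sym{n}$, not a general group acting on a central product of quasi-simple groups, so that step is not covered by anything proved in the paper. The pieces you do nail down (Lemma~\ref{lm:center}(ii) being excluded for perfect $G$, Gr\"un's observation $Z_2(G)=Z(G)$, the direct-product reduction via Proposition~\ref{direct}, and the ${\rm M}_{22}$ computation) are correct but are the easy part.

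Your sharpest observation is in fact fatal to the enterprise: the paper's own Example, $G=\F_{11}^2\rtimes\SL_2(5)$, is a perfect group of order $14520<15360$ with $\varUpsilon_{11}(G)=\Irr{G}$ and $c_\chi=\chi(1)\in\{2,3,4,5,6\}$ for the nonlinear characters inflated from $\SL_2(5)$, which contradicts the conjecture as literally stated ($p=11\ne 3$, $|c_\chi|$ up to $6$). So the solvable-radical step you identify as ``the main obstacle'' is not merely hard --- it is false without an extra hypothesis such as $O_\infty(G)=1$ or faithfulness of $\chi$, and the conjecture needs to be restated before any proof can exist. You were right to flag this, but a proof proposal that ends by noting the statement fails as written and that the remaining cases are open cannot be counted as a proof; it is a (useful) critique of the conjecture together with a plausible plan for a corrected version.
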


Notice that the only simple sporadic group admitting irreducible $p$-constant characters with $c_\chi=\pm 2$ is the 
Mathieu group ${\rm M}_{22}$, that satisfies Conjecture  \ref{perfect}.
The other known simple groups satisfying Conjecture \ref{perfect} are the groups $\PSL_3(q)$ when $\gcd(q-1,9)=3$ and $\PSU_3(q)$ when $\gcd(q+1,9)=3$. 
Indeed, in these cases, the Sylow $3$-subgroups are isomorphic to $\Cyc{3}\times\Cyc{3}$ and using the character tables described in \cite{SF} we may prove that following.

\begin{proposition}\label{pr:L3}
Let $G\in \{\PSL_3(q),\PSU_3(q)\}$ and let $r$ be a prime dividing $|G|$ such that $\gcd(r,q)=1$.
Let  $\delta=+1$ if $G=\PSL_3(q)$, $\delta=-1$ if $G=\PSU_3(q)$. Set $d=\gcd(q-\delta,3)$. 
Then $1_G\neq \chi \in \varUpsilon_r(G)$ if and only if one of the following holds:
\begin{itemize}
\item[(i)] $r$ divides $(q^2+\delta q+1)/d$ and
\begin{itemize}
\item[(a)] $\chi(1)=q^2+\delta q$: in this case $c_\chi=-1$;
\item[(b)] $\chi(1)=q^3$: in this case  $c_\chi=\delta$.
\end{itemize}

\item[(ii)] $2\neq r$ divides $(q+\delta)$ and
\begin{itemize}
\item[(a)] $\chi(1)=q^3$: in this case $c_\chi=-\delta$;
\item[(b)] $d=1$,  $\gcd(q+\delta,9)=r=3$ and $\chi(1)=q^3-\delta$: in this case $c_\chi=\delta$.
\end{itemize}

\item[(iii)] $d=r=\gcd(q-\delta,9)=3$ and 
\begin{itemize}
\item[(a)] $\chi(1)=q^2+\delta q$: in this case $c_\chi=+2$;
\item[(b)] $\chi(1)=q^3$: in this case $c_\chi=\delta$;
\item[(c)] $\chi(1)=\frac{(q+\delta)(q^2+\delta q+1)}{3}$: in this case $c_\chi=-\delta$.
\end{itemize}
\end{itemize}
\end{proposition}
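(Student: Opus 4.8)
The plan is to prove Proposition \ref{pr:L3} by a direct, case-by-case inspection of the generic character tables of $\PSL_3(q)$ and $\PSU_3(q)$ as presented in \cite{SF}, exploiting the fact that for $r\neq p = \mathrm{char}(\F_q)$ the $r$-singular elements are controlled by the torus structure. First I would recall, for each of the two families (unified via the sign $\delta$), the partition of the conjugacy classes into the standard types: central/semisimple unipotent classes, regular and subregular unipotent classes, and the semisimple classes indexed by elements of the maximal tori of orders $(q-\delta)^2/d$, $(q^2-1)/d$, and $(q^2+\delta q+1)/d$ (together with the mixed classes). The key observation is that an element $g$ is $r$-singular precisely when $r$ divides the order of its semisimple part lying in one of these tori, or when $r=p$; since we assume $\gcd(r,q)=1$, only the tori matter. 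For each prime $r\mid |G|$ with $\gcd(r,q)=1$, the set $\varSigma_r(G)$ is then the union of those class-types whose parameter order is divisible by $r$, and I would tabulate which tori are affected in the three regimes (i): $r\mid (q^2+\delta q+1)/d$, (ii): $r\mid q+\delta$ with $r\neq 2$, and (iii): the special case $d=r=\gcd(q-\delta,9)=3$.

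Next I would go through the (finitely many families of) irreducible characters $\chi$ of $G$ — the unipotent characters of degrees $1$, $q^2+\delta q$, $q^3$; the characters of degree $q^2+\delta q+1$, $(q-\delta)(q^2+\delta q+1)$, etc.; and the ``large'' families parametrized by torus characters — and for each one evaluate $\chi$ on every class-type contained in $\varSigma_r(G)$, checking whether all these values coincide. For the unipotent characters this is immediate from the known values (e.g. the Steinberg character of degree $q^3$ vanishes on all non-semisimple classes and takes value $\pm 1$ on the relevant semisimple classes, which is exactly how $c_\chi=\pm\delta$ arises); for the degree $q^2+\delta q$ character the value on regular/subregular unipotent elements is $-1$ or $+2$ according to whether $d=1$ or $d=3$, which accounts for cases (i)(a) and (iii)(a). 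For the remaining (non-unipotent) families one checks that a character can only be $r$-constant if its defining torus character is trivial on the $r$-part, and this forces the degree into the short list in the statement; the degenerate degree $q^3-\delta$ in (ii)(b) and $\frac{(q+\delta)(q^2+\delta q+1)}{3}$ in (iii)(c) come from the characters that become reducible exactly when $d=3$, so the congruence conditions on $\gcd(q\mp\delta,9)$ enter precisely here. Conversely, for each listed $(\chi,r)$ I would exhibit the common value $c_\chi$ by reading it off the table on one representative $r$-singular class and noting it agrees on all others.

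I expect the main obstacle to be purely bookkeeping: the character tables of $\PSL_3(q)$ and $\PSU_3(q)$ split into several cases according to $d=\gcd(q-\delta,3)$, and within $d=3$ certain characters split or fuse, so one must carefully track how the class-types and character-types reorganize (and how the scalar values pick up the factor $d$) in each regime. The genuinely subtle point is case (iii): here $r=3=d$ divides both $q-\delta$ and $(q^2+\delta q+1)/d$ is coprime to $3$, so $\varSigma_3(G)$ picks up the unipotent classes and the classes from the $(q-\delta)^2/3$-torus but not the $(q^2+\delta q+1)/3$-torus, and one must verify that the value $+2$ on the degree $q^2+\delta q$ character is genuinely constant across the enlarged $r$-singular set — this is where the hypothesis $\gcd(q-\delta,9)=3$ (as opposed to $9\mid q-\delta$) is used, since otherwise additional $3$-singular semisimple classes appear on which the character takes a different value. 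Everything else reduces to the elementary observation (already exploited in the proof of Proposition \ref{direct} and Theorem \ref{th:wr}) that constancy on $\varSigma_r(G)$ forces strong multiplicative constraints on the character values, which in a rank-$2$ group of Lie type can be checked directly.
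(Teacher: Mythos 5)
Your proposal follows exactly the route the paper takes: the paper offers no written proof of Proposition \ref{pr:L3} beyond the remark that it follows from the generic character tables of Simpson and Frame \cite{SF}, and your plan --- locating the $r$-singular classes via the semisimple parts lying in the three maximal tori and then testing each family of characters for constancy on those classes --- is precisely that verification. One small internal inconsistency to fix: in case (i)(a) the value $-1$ of the degree-$(q^2+\delta q)$ character is read off the regular semisimple classes of the Coxeter torus of order $(q^2+\delta q+1)/d$ (the only $r$-singular classes in that regime), not off regular/subregular unipotent classes, which, as you yourself correctly note earlier, are never $r$-singular when $\gcd(r,q)=1$.
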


\end{document}